\title[$0$-Schur algebras]{Projective modules of $0$-Schur algebras}
\author{Bernt Tore Jensen}
\address{BTJ: Gj\o vik University College, Teknologvn. 22, 2815 Gj\o vik, Norway}
\email{bernt.jensen@hig.no}
\author{Xiuping Su}
\address{XS: Mathematical Sciences, Univ. of Bath, Bath BA2 7AY, U.K.}
\email{xs214@bath.ac.uk}
\author{Guiyu Yang}
\address{GY: School of Science,
Shandong University of Technology, Zibo 255049 , China}
\email{yanggy@mail.bnu.edu.cn}
\newtheorem{theorem}{Theorem}[section]
\newtheorem{lemma}[theorem]{Lemma}
\newtheorem{proposition}[theorem]{Proposition}
\newtheorem{example}[theorem]{Example}
\newtheorem{corollary}[theorem]{Corollary}
\renewcommand{\hom}{\mathrm{Hom}}
\newcommand{\gl}{\mathrm{GL}}
\newcommand{\lra}{\longrightarrow}
\newcommand{\ra}{\rightarrow}
\newcommand{\sdp}{\times\kern-.2em\vrule height1.1ex depth-.0.5ex}
\newcommand{\epi}{\lra \kern-.8em\ra}
\newcommand{\zschur}{S_0(n, r)}
\newcommand{\lam}{\lambda}
\thanks{This work was supported by the EPSRC Grant EP/1022317/1.}
\date{\today}
\begin{document}

\begin{abstract}
We study the structure of  the $0$-Schur algebra $S_0(n, r)$ following the geometric construction of 
$S_0(n, r)$ by Jensen and Su \cite{JS}. The main results are the construction and classification 
of indecomposable projective modules. In addition, we construct bases of these modules and their 
homomorphism spaces. We also give a filtration of projective modules, which leads to 
a decomposition of $S_0(n,r)$ into indecomposable
left modules. 
\end{abstract}

\maketitle

\section{Introduction}Using double flag varieties,
Beilinson, Lusztig and MacPherson \cite{BLM} gave a geometric construction of  certain 
finite dimensional quotients of the quantised 
enveloping algebra $U_q(gl_n)$. Du \cite{Duj} remarked that these quotients are
isomorphic to the $q$-Schur algebras $S_q(n,r)$ defined by Dipper and James \cite{DJ}.

The $0$-Schur algebra $S_0(n,r)$, defined by Donkin \cite{Donkin}, is obtained by 
specialising the $q$-Schur algebra at $q=0$, i.e.
$$S_0(n, r)=S_q(n, r)\otimes_{\mathbb{Z}[q]}\mathbb{Z}.$$ 
 As $0$-Schur algebras are also
endomorphism algebras of direct sums of permutation modules of $0$-Hecke algebras (see \cite{DY}), these 
two classes of algebras are closely related and have
been studied by various people. For instance,
Donkin \cite{Donkin} proved that
the $0$-Schur algebra $S_0(n, r)$ and $0$-Hecke algebra $H_0(r)$ are Morita-equivalent when $n\geq r$.
Norton \cite{Norton} classified simple $H_0(r)$-modules and proved that
$H_0(r)$ is a basic algebra. Duchamp, Hivert and Thibon \cite{Thibon} computed extensions of simple $H_0(r)$-modules,
and their results were generalised to $0$-Hecke algebras of other Coxeter groups by Fayers \cite{Fayers}.
Deng and Yang \cite{DY, DY2} studied the representation type of $S_0(n, r)$ and $H_0(r)$.
We also mention that Stembridge \cite{Stem} used the 0-Hecke algebra to give a new proof for the M\"{o}bius function of
the Bruhat order and He  \cite{He} used 0-Hecke algebras to give an elementary construction
of a monoid studied by Berenstein and Kazhdan \cite{Berenstein}.

In this paper, we will focus on understanding the projective $S_0(n,r)$-modules for any $n$ and $r$, using the 
geometric construction of $S_0(n,r)$ due to Jensen and Su \cite{JS}. 
In particular, 
we classify idempotent orbits in $S_0(n,r)$ and show that they are parameterised by pairs $(\lambda,\underline m)$, 
where $\lambda$ is a composition of $r$ into $n$ parts and $\underline m$ is a certain decomposition of $n$, which is 
said to be {\it maximal} with respect to $\lambda$. Each 
idempotent orbit, denoted by $o_{\lambda,\underline m}$, generates a projective $\zschur$-module.
Two projective modules $\zschur o_{\lambda,\underline m}$ and
$\zschur o_{\mu,\underline p}$ are shown to be isomorphic if and only if $c(\lambda,\underline m)=c(\mu,\underline p)$, 
where $c(\lambda,\underline m)$ and $c(\mu,\underline p)$
are compositions of $r$ constructed from $\lambda,\; \underline m$ and $\mu,\; \underline p$, respectively. 
We also compute bases of these modules and their homomorphism spaces.

We then construct a family of indecomposable projective $\zschur$-modules $P_\lambda$ indexed by compositions $\lambda$
of $r$ into $n$ parts. There is an equivalence relation on the set of compositions and we show that the isomorphism
classes of indecomposable projective $\zschur$-modules are indexed by the equivalence classes. 
Further, by constructing a filtration of projective modules, we show that
$$\zschur\cong\bigoplus_\lambda\bigoplus_{\underline m} P_{c(\lambda,\underline m)}$$
as left modules,  where the sums are over compositions $\lambda$ of $r$ into $n$ parts and decompositions $\underline m$ 
that are maximal with respect to $\lambda$.

We remark that using the results in this paper, we are able to construct explicit choices of irreducible maps between indecomposable
projective modules. This gives a new account on the extensions of simple $H_0(r)$-modules from \cite{Thibon}. 
Using our approach, we can compute Gabriel quivers with relations of basic algebras that are Morita-equivalent to 
$0$-Schur algebras, including $S_0(3, 5)$, $S_0(4, 5)$ and $S_0(5,5)$. These results will appear elsewhere.

The remainder of the paper is organised as follows. In Section 2 we recall necessary results on $0$-Schur algebras. 
We classify idempotent orbits in Section 3 and study the associated projective modules in Section 4. In Section 5 we
prove results on indecomposable projective modules and the decomposition of $S_0(n,r)$.

\section{Background on $0$-Schur algebras.}

\subsection{Pairs of flags and matrices}

Let $V$ be an $r$-dimensional vector space over an algebraically closed field. Let $\mathcal{F}$ denote
the variety of $n$-step flags in $V$. A flag $f$ in  $\mathcal{F}$ is denoted by
$$f:\{0\}=V_0\subseteq V_1 \subseteq \cdots\subseteq
V_n=V.$$ The general linear group $\gl(V)$ acts naturally on $V$ and there is an induced action on $\mathcal{F}$.
Let $\gl(V)$ act diagonally on $\mathcal{F}\times \mathcal{F}$.
For any $f\in \mathcal{F}$, let $\lam_i=\dim V_i-\dim V_{i-1}$ for
$i=1, \cdots, n$. Then $$\lam=(\lam_1, \cdots,  \lam_n)$$ is a {\it composition}
of $r$ into $n$ parts. Two flags are in the same $\gl(V)$-orbit if and only if they have the same composition.
Let $\Lambda(n,r)$ denote the set of all compositions of $r$ into $n$ parts, and
let $\mathcal{F}_{\lam}\subseteq \mathcal{F}$ denote the orbit
corresponding to $\lam\in \Lambda(n,r)$.

Any pair of flags $(f,f')\in \mathcal{F}\times \mathcal{F},$
$$f:\{0\}=V_0\subseteq V_1 \subseteq \cdots \subseteq V_n=V \;\;\mbox{ and }\;\;
f':\{0\}=V'_0\subseteq V'_1 \subseteq \cdots \subseteq V'_n=V,$$ determines a matrix $A=(a_{ij})_{ij}$ with
\begin{align*}
a_{ij} &= \dim({V_{i-1}+V_i \cap V'_{j}})- \dim({V_{i-1}+V_i \cap V'_{j-1}})\\
&= \dim V_i\cap V_{j}'- \dim(V_i\cap V_{j-1}'+V_{i-1}\cap V_j').
\end{align*}
This defines a bijection
between the set of orbits ${\mathcal F}\times {\mathcal F}/\gl(V)$ and the set of $n\times n$
matrices of non-negative integers with sum of all entries equal to $r$. We
often denote the $\gl(V)$-orbit $[f, f']$ of $(f, f')$ by $e_A$, where $A$ is the matrix corresponding to
$(f, f')$.

\newcommand{\ro}{\mathrm{ro}}
\newcommand{\co}{\mathrm{co}}

Given a matrix $A$, the {\it row vector} $\ro(A)$ of $A$ is the vector with
$i$th component equal to the sum of the entries in the  $i$th row of $A$. Similarly, the $i$th component
of the {\it column vector} $\co(A)$ is equal to the sum of the entries in the  $i$th column of $A$. That is,
$$
\mathrm{ro}(A)=\left(\sum_i{a_{1i}}, \dots, \sum_{i}a_{ni}\right) \mbox{ and }
\mathrm{co}(A)=\left(\sum_{i}a_{i1}, \dots, \sum_{i}a_{in}\right).
$$
We have $e_A\subseteq \mathcal{F}_\lam\times \mathcal{F}_\mu$ if and only if $\ro(A)=\lam$ and $\co(A)=\mu$.

\subsection{The $0$-Schur algebra.}

Let $\Delta$ and $\pi$ be the maps
$$\xymatrix{\mathcal{F}\times \mathcal{F} \times \mathcal{F}
\ar^{\pi}[d] & \ar^{\Delta}[r] && (\mathcal{F}\times \mathcal{F})
\times (\mathcal{F}\times \mathcal{F})  \\ \mathcal{F}\times \mathcal{F}}$$
with $\Delta(f,f',f'')=((f,f'),(f',f''))$ and $\pi(f,f',f'')=(f,f'')$.

Following \cite{JS}, the $0$-Schur algebra $S_0(n, r)$ can be defined as
the associative $\mathbb{Z}$-algebra with basis $\mathcal{F}\times \mathcal{F} / \gl(V)$ and 
multiplication given by
$$e_A\cdot e_B=\left \{  \begin{tabular}{ll}  $e_C$ & if $\ro(B)=\co(A)$,\\ $0$ & {otherwise},  \end{tabular}\right.$$
where $e_C$ is the unique open orbit in $\pi\Delta^{-1}(e_A\times e_B)$.
The $0$-Schur algebra is isomorphic to the $q$-Schur algebra of 
Beilinson, Lusztig and MacPherson specialised at $q=0$.

Since $S_0(n,r)$ is defined over the integers, we can define $0$-Schur algebras over any commutative 
ring by extension of scalars. In particular, we have finite dimensional $0$-Schur 
algebras over any field.

\subsection{The fundamental multiplication rules}

We denote by $e_{i, \lam}$ (resp. $f_{j, \lam}$) the orbit corresponding to the matrix that has column vector $\lam$, and one
non-zero off-diagonal entry, equal to $1$, at $(i, i+1)$ (resp. $(j+1, j)$).
Denote by $k_\lam$  the idempotent corresponding to the diagonal matrix with column vector $\lam$.
Note that $e_{i, \lam}$, $f_{i, \lam}$ and $k_\lam$, where $1\leq i\leq n-1$ and $\lam \in \Lambda(n, r)$, generate 
$S_0(n, r)$ as an algebra.  Let $$e_i=\sum_{\lambda}e_{i,\lambda}\mbox{ and } f_i=\sum_{\lambda}f_{i,\lambda}.$$
Denote by $E_{ij}$ the elementary matrix with a single non-zero entry, equal to $1$, at $(i, j)$.
The following formulas are called the fundamental multiplication rules in $S_0(n, r)$.

\begin{lemma}[Lemma 6.11) \cite{JS}] \label{FundMult}
Let $e_A\subseteq \mathcal{F}_{\lam} \times \mathcal{F}_\mu$.
\begin{itemize}
\item[(1)] If $\lam_{i+1}>0$, then $e_{i} e_A=e_X$, where
$X=A+E_{i,p}-E_{i+1,p}$ with $p=max\{j\mid a_{i+1,j}>0\}$.
\item[(2)] If $\lam_{i}>0$, then $f_{i} e_A=e_Y$, where
$Y=A-E_{i,p}+E_{i+1,p}$ with $p=min\{j \mid a_{i,j}>0\}$.
\end{itemize}
\end{lemma}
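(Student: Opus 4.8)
The statement to prove is Lemma \ref{FundMult}, the fundamental multiplication rules: for $e_A \subseteq \mathcal{F}_\lambda \times \mathcal{F}_\mu$, the product $e_i e_A$ (when $\lambda_{i+1} > 0$) equals $e_X$ with $X = A + E_{i,p} - E_{i+1,p}$, $p = \max\{j \mid a_{i+1,j} > 0\}$, and dually for $f_i e_A$. The plan is to compute the product directly from the $G(n,r)$-multiplication rule recalled in the introduction: $e_{f,g}e_{h,l}$ is the unique open orbit in $\pi\Delta^{-1}([f,g]\times[h,l])$ when $g \cong h$, and $0$ otherwise. Here the left factor $e_i = e_{i,\lambda}$ corresponds to the matrix $C$ with column vector $\lambda$, diagonal except for a single $1$ in position $(i,i+1)$; one checks $\mathrm{ro}(C) = \mathrm{co}(A) = \mu'$... wait, more carefully: we need $\mathrm{ro}$ of the right flag of $e_i$ to match $\mathrm{ro}$ of the left flag of $e_A$, i.e. the middle flag is well-defined. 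Since $e_i \subseteq \mathcal{F}_{?} \times \mathcal{F}_\lambda$ and $e_A \subseteq \mathcal{F}_\lambda \times \mathcal{F}_\mu$, the gluing is along $\mathcal{F}_\lambda$ and the condition $\lambda_{i+1} > 0$ is exactly what makes the orbit $e_i$ (with that off-diagonal $1$) nonempty.

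First I would set up explicit flags realizing the three orbits. Pick a flag $g$ of type $\lambda$ and choose a flag $f$ so that $(f,g)$ lies in the orbit $e_i$: concretely, $f$ agrees with $g$ except that $f_i$ is obtained from $g_i$ by swapping out one basis vector from the $(i+1)$-st graded piece for one in the $i$-th — this is the standard "move one dimension up across the wall between $i$ and $i+1$" picture. Then choose a flag $l$ with $(g,l)$ in the orbit $e_A$. The fibre product $\Delta^{-1}([f,g]\times[h,l])$, after identifying $h \cong g$, consists of triples $(f', g', l')$ with $(f',g') \cong (f,g)$ and $(g',l') \cong (g,l)$; projecting to $(f',l')$ and taking the unique open (i.e. generic) orbit, I would argue that generically $f'_i$ can be chosen to contain the "most-left-supported" available dimension of $l'$. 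This is precisely where $p = \max\{j \mid a_{i+1,j} > 0\}$ enters: among the vectors in $g_i/g_{i-1}$... rather, among the dimensions counted by row $i+1$ of $A$, the generic choice pushes the one that sits in the largest column-index $V'_p$ up into the $i$-th piece, which changes $A$ by $+E_{i,p} - E_{i+1,p}$.

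The cleanest route is probably to work entirely on the quiver side using Lemmas \ref{closedorbit} and \ref{openorbit}. Write $A$ as $\bigoplus_l N_{i_l j_l}$; left-multiplication by $e_i$ should act on this decomposition by modifying exactly one summand $N_{i+1, p}$ (with $p$ maximal, which corresponds to picking $j_l$ as large as possible among those with $i_l = i+1$) to $N_{i, p}$, because $e_i$ is an "elementary" orbit and the open-orbit operation $\pi\Delta^{-1}$ amounts to taking the generic extension/pushout in the category of $\Gamma$-representations. I would verify: (i) $g \cong h$ holds so the product is nonzero, using $\mathrm{ro}$/$\mathrm{co}$ bookkeeping; (ii) the resulting matrix $X$ has the claimed form, by tracking multiplicities $N_{ij}$; (iii) $e_X$ is indeed the \emph{open} orbit in the fibre, by checking the $i_l \geq i_{l+1}$ condition of Lemma \ref{openorbit} is achieved by the maximal-$p$ choice (any other choice of which dimension to move up gives a non-generic, hence non-open, orbit in the closure). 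Part (2) for $f_i$ is entirely dual, transposing the roles of $\Lambda_L$ and $\Lambda_R$ and replacing $\max$ by $\min$.

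The main obstacle I anticipate is (iii): showing that the specific matrix $X$ with $p$ maximal gives the \emph{open} orbit and not merely \emph{an} orbit in $\pi\Delta^{-1}([f,g]\times[h,l])$. This requires a genuine dimension count — either computing $\dim \mathrm{Stab}_{\gl(V)}(f',l')$ and minimizing it over the fibre, or invoking the characterization of open orbits via Lemma \ref{openorbit} together with an argument that the closure of $e_X$ contains all the other orbits in the fibre. Concretely one shows: if instead we moved up a dimension sitting in $V'_q$ with $q < p$, the resulting representation would have a pair of summands $N_{i,q}$ and $N_{i+1,p}$ with $i < i+1$ but $q < p$, contradicting the open-orbit inequality, and moreover a short exact sequence exhibits $e_X$ specializing to this configuration. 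The rest of the argument — nonvanishing and the precise form of $X$ — is routine bookkeeping with row/column vectors and the bijection between orbits, matrices, and $\Gamma$-representations established in Section 2.
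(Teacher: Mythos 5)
This statement is not proved in the paper at all: it is quoted verbatim from \cite{JS} (Lemma 6.11 there), so there is no internal proof to compare against. Judged on its own terms, your proposal is a plausible outline of how such a proof goes in the geometric framework — identify $e_i$ with the near-diagonal matrix, glue along the middle flag of type $\lambda$, and read off the product as the dense orbit in $\pi\Delta^{-1}([f,g]\times[h,l])$ using the quiver decomposition $\bigoplus_l N_{i_lj_l}$ — but it is a plan rather than a proof. You yourself flag the decisive step, item (iii), and leave it open: nothing in the write-up actually establishes that the matrix $X=A+E_{i,p}-E_{i+1,p}$ with $p$ \emph{maximal} gives the dense orbit in the image of the fibre, which is the entire content of the lemma.

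Moreover, the mechanism you propose for (iii) would not work as stated. You want to rule out the alternative candidates $A+E_{i,q}-E_{i+1,q}$, $q<p$, by appealing to Lemma \ref{openorbit}, i.e.\ to the characterization of orbits that are open in all of $\mathcal{F}\times\mathcal{F}$. But the product $e_ie_A$ is in general \emph{not} an open orbit of $\mathcal{F}_{\lambda+\alpha_i-\alpha_{i+1}}\times\mathcal{F}_\mu$; it is only required to be open (dense) inside the much smaller constructible set $\pi\Delta^{-1}([f,g]\times[h,l])$, whose image is a union of the finitely many candidate orbits. Neither $e_X$ nor its competitors need satisfy the global $i_l\geq i_{l+1}$ condition, so violating that condition does not disqualify a candidate. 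What is actually needed is a comparison \emph{within the fibre}: either a stabilizer/dimension count showing the maximal-$p$ choice has minimal stabilizer among the candidates, or a degeneration argument (in the spirit of Lemma \ref{DegMult} and Proposition \ref{Dujie}) showing every candidate with $q<p$ lies in the closure of $e_X$ together with the verification that the image of the fibre is exactly the union of these candidates. Your parenthetical mention of a short exact sequence exhibiting the specialization points in the right direction, but it is precisely the part you have not carried out, so the proposal as written has a genuine gap at its central step.
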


We will also need multiplication rules with $e_i$ and $f_j$ on the right, which follow from 
Lemma \ref{FundMult} by symmetry.

\begin{lemma} \label{DFundMult}
Let $e_A\subseteq \mathcal{F}_{\lam} \times \mathcal{F}_\mu$.
\begin{itemize}
\item[(1)] If $\mu_{i+1}>0$, then $ e_Af_i=e_X$, where
$X=A+E_{p, i}-E_{p, i+1}$ with $p=max\{j\mid a_{j, i+1}>0\}$.
\item[(2)] If $\mu_{i}>0$, then $ e_Ae_i=e_Y$, where
$Y=A-E_{p, i}+E_{p, i+1}$ with $p=min\{j\mid a_{j, i}>0\}$.
\end{itemize}
\end{lemma}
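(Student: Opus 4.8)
The plan is to prove Lemma~\ref{DFundMult} by reducing it to Lemma~\ref{FundMult} via the transpose symmetry of the geometric construction. The key observation is that swapping the two flags in a pair $(f, f')$ corresponds to transposing the associated matrix: if $A = A(f, f')$ then $A^{t} = A(f', f)$, as is immediate from the second (symmetric) formula defining $a_{ij}$. Under this swap, $e_A \subseteq \mathcal{F}_\lam \times \mathcal{F}_\mu$ becomes $e_{A^t} \subseteq \mathcal{F}_\mu \times \mathcal{F}_\lam$, the row and column vectors exchange roles, and, crucially, right multiplication in $S_0(n,r)$ turns into left multiplication in the ``opposite'' copy. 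Concretely, the anti-automorphism $\tau$ of $S_0(n,r)$ sending $e_A \mapsto e_{A^t}$ (which exists because the multiplication rule $[f,g][h,l] = [f',l']$ when $g \cong h$ is visibly compatible with reversing all flags) interchanges $e_i$ with $f_i$ and $k_\lam$ with $k_\lam$, since $e_{i,\lam}$ has matrix $D_\lam + E_{i,i+1}$ whose transpose is $D_{\lam} + E_{i+1,i}$, the matrix of $f_{i}$ on the appropriate weight space (here $D_\lam$ is the diagonal matrix with the relevant vector on the diagonal).

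First I would record that $\tau$ is a well-defined anti-automorphism: this is essentially the statement that the open orbit in $\pi\Delta^{-1}([f,g]\times[h,l])$ maps to the open orbit in $\pi\Delta^{-1}([l,h]\times[g,f])$ under flag reversal, which follows because the whole diagram defining $\pi$ and $\Delta$ is symmetric under reversing the order of flags in every tuple, and openness is preserved by this homeomorphism of $\mathcal{F}\times\mathcal{F}$. Alternatively, and perhaps more cleanly for the paper, I would simply cite that this anti-automorphism is already available in \cite{JS} (it is the restriction of the standard anti-automorphism on the $q$-Schur algebra). Then, applying $\tau$ to Lemma~\ref{FundMult}(1): from $e_i e_A = e_X$ with $X = A + E_{i,p} - E_{i+1,p}$, $p = \max\{j \mid a_{i+1,j} > 0\}$, we get $\tau(e_A)\tau(e_i) = \tau(e_X)$, i.e. $e_{A^t} f_i = e_{X^t}$ with $X^t = A^t + E_{p,i} - E_{p,i+1}$ and $p = \max\{j \mid (A^t)_{j,i+1} > 0\}$; the hypothesis $\lam_{i+1} > 0$ becomes $\mu_{i+1} > 0$ where $\mu = \co(A^t) = \ro(A)$. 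Renaming $A^t$ as the generic matrix gives exactly statement (1) of Lemma~\ref{DFundMult}. Statement (2) follows identically from Lemma~\ref{FundMult}(2), with $\max$ replaced by $\min$ throughout.

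The only genuine point requiring care — the ``hard part'', though it is more bookkeeping than difficulty — is verifying that $\tau$ sends the generator $e_i$ (on a given weight space) to $f_i$ (on the transposed weight space) and not to something else, and tracking how the constraint $\lam_{i+1} > 0$ transforms. Since $e_{i,\lam}$ is defined as the orbit whose matrix has column vector $\lam$ and unique nonzero off-diagonal entry $1$ at position $(i, i+1)$, its transpose has row vector $\lam$ and unique nonzero off-diagonal entry $1$ at $(i+1, i)$; comparing with the definition of $f_{i,\mu}$ (matrix with column vector $\mu$, nonzero off-diagonal $1$ at $(i+1,i)$), one checks the transposed matrix is $f_{i,\mu}$ for the appropriate $\mu$, namely $\mu = \ro(e_{i,\lam})$. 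I would spell this out in one or two sentences and then let the transpose symmetry do the rest. If one prefers to avoid invoking the anti-automorphism altogether, the alternative is a direct computation: expand both sides using the defining dimension formula for $a_{ij}$ applied to the pair $(f', f)$ in place of $(f, f')$, which literally reproduces the proof of Lemma~\ref{FundMult} in \cite{JS} with the roles of the two flags exchanged; this is routine and I would only sketch it.
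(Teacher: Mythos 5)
Your proposal is correct and takes essentially the same route as the paper: the paper simply asserts that Lemma~\ref{DFundMult} is the left–right symmetric version of Lemma~\ref{FundMult} and offers no further proof, and your transpose anti-automorphism $e_A\mapsto e_{A^t}$ (swapping $e_i$ and $f_i$ and exchanging $\ro$ and $\co$) is precisely the formalization of that symmetry, with the bookkeeping of the hypothesis $\mu_{i+1}>0$ and of $p$ carried out correctly.
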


\subsection{Degeneration of orbits.}

We say that an orbit $e_A$ {\it degenerates} to $e_B$, denoted by $e_A\leq e_B$, if $e_B$ is contained in the orbit
closure of $e_A$. We first recall a result from \cite{JS}, showing that the degeneration order is
preserved by multiplication.

\begin{lemma}[Lemma 9.1, Corollary 9.4 \cite{JS}]\label{DegMult}
Let $e_{A_1}, e_{B_1}\subseteq \mathcal{F_\lam}\times \mathcal{F_{\mu}}$ and
$e_{A_2}, e_{B_2}\subseteq \mathcal{F_\mu}\times \mathcal{F_{\alpha}}$ with $e_{A_i}\leq e_{B_i}$ for $i=1, 2$.
\begin{itemize}
\item[(1)] $e_{A_1}e_{A_2}\leq e_{B_1}e_{B_2}$. 

\item[(2)] If $e_{A_1}$ or $e_{A_2}$ is open, then $e_{A_1}e_{A_2}$ is open.
\end{itemize}
\end{lemma}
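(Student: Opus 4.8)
The statement has two parts. The first asserts that degeneration is compatible with the multiplication $e_{A_1}e_{A_2}\leq e_{B_1}e_{B_2}$; since this is quoted directly as Lemma 9.1 / Corollary 9.10 of \cite{JS}, the task is really to record the proof of the ``in particular'' clause, namely that if one of the two factors is open then the product is open. My plan is to reduce this to the first part together with the characterisation of open orbits. First I would observe that the product $e_{A_1}e_{A_2}$ lands in $\mathcal{F}_\lam\times\mathcal{F}_\alpha$ (up to the zero case, which I will treat separately), and that this fixed pair of flag-types contains a \emph{unique} open orbit, call it $o$; this is because the open orbit in $\mathcal{F}_\lam\times\mathcal{F}_\alpha$ is the generic one and is characterised by Lemma \ref{openorbit} as the one whose matrix decomposes with $i_l\geq i_{l+1}$. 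Hence to show $e_{A_1}e_{A_2}$ is open it suffices to show $e_{A_1}e_{A_2}\geq o$, i.e.\ that $o$ lies in its orbit closure, equivalently $e_{A_1}e_{A_2}\leq o$ is \emph{not} what we want — rather we want $e_{A_1}e_{A_2}=o$, and since $o$ is the unique open orbit it is enough to show $e_{A_1}e_{A_2}$ degenerates \emph{to} nothing strictly, which is awkward; the cleaner route is the opposite direction described next.

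The clean argument: suppose $e_{A_1}$ is open in $\mathcal{F}_\lam\times\mathcal{F}_\mu$. Then for \emph{any} orbit $e_{B_1}$ in the same $\mathcal{F}_\lam\times\mathcal{F}_\mu$ we have $e_{A_1}\leq e_{B_1}$, because the open orbit is dense and hence every orbit lies in its closure. Apply the first part of the lemma with $B_1$ chosen so that $e_{B_1}e_{A_2}$ is the open orbit $o$ of $\mathcal{F}_\lam\times\mathcal{F}_\alpha$ — for instance take $e_{B_1}$ itself open with the appropriate flag types, so that by the multiplication rule in $G(n,r)$ (the displayed rule with $g\cong h$) the product $e_{B_1}e_{A_2}$ is the unique open orbit in $\pi\Delta^{-1}(e_{B_1}\times e_{A_2})$, which is exactly $o$. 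Then the first part gives $e_{A_1}e_{A_2}\leq e_{B_1}e_{A_2}=o$. But $o$ is open, so its orbit closure is everything; in particular $e_{A_1}e_{A_2}\leq o$ forces (since $o$ is the \emph{generic}, maximal-dimensional orbit and degeneration goes from larger to smaller closures) $e_{A_1}e_{A_2}=o$, hence open. The case where $e_{A_2}$ is open is symmetric, using right multiplication and Lemma \ref{DFundMult} in place of Lemma \ref{FundMult}. If at some stage a product is $0$ (i.e.\ the middle flags are non-isomorphic), the ``in particular'' claim is vacuous for that pair, so we simply exclude it.

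The one genuinely delicate point — the step I expect to be the main obstacle — is justifying that $e_{A_1}e_{A_2}\leq o$ with $o$ open forces equality, i.e.\ that an open orbit cannot properly degenerate to another orbit in the same double flag variety while still being a value of the multiplication map. This needs the fact that within a fixed $\mathcal{F}_\lam\times\mathcal{F}_\alpha$ the open orbit is unique and is maximal in the degeneration order (both from Lemma \ref{openorbit} and the density of the open orbit), so that $e_{A_1}e_{A_2}\leq o$ together with $e_{A_1}e_{A_2}$ and $o$ lying in the \emph{same} double flag variety yields $e_{A_1}e_{A_2}=o$. I would also need to be slightly careful that $e_{B_1}e_{A_2}$ really equals the open orbit $o$ and not merely some open orbit: by the $G(n,r)$ multiplication rule it is ``the unique open orbit in $\pi\Delta^{-1}(e_{B_1}\times e_{A_2})$'', and since $\mathcal{F}_\lam\times\mathcal{F}_\alpha$ has only one open orbit this is unambiguous. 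Everything else is bookkeeping with the explicit multiplication rules of Lemmas \ref{FundMult} and \ref{DFundMult} and is routine.
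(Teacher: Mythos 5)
The paper offers no proof of this lemma at all: it is quoted directly from \cite{JS} (Lemma 9.1 and Corollary 9.10), so the only question is whether your derivation of the ``in particular'' clause from the first part stands on its own. It does not, because the key step is circular. You need some orbit $e_{B_1}$ with $e_{A_1}\leq e_{B_1}$ and $e_{B_1}e_{A_2}$ equal to the dense orbit $o$ of $\mathcal{F}_\lam\times\mathcal{F}_\alpha$, and you propose to take $e_{B_1}$ open. But the open orbit of $\mathcal{F}_\lam\times\mathcal{F}_\mu$ is unique, so this choice is $e_{B_1}=e_{A_1}$ itself, and the claim $e_{B_1}e_{A_2}=o$ is then precisely the statement being proved. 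Your justification --- that by the multiplication rule the product is ``the unique open orbit in $\pi\Delta^{-1}(e_{B_1}\times e_{A_2})$, which is exactly $o$'' --- conflates openness in the constructible subset $\pi\Delta^{-1}(e_{B_1}\times e_{A_2})$ with openness in the whole variety $\mathcal{F}_\lam\times\mathcal{F}_\alpha$. Every product of composable orbits is, by definition of the multiplication in $G(n,r)$, the orbit that is open in the corresponding image $\pi\Delta^{-1}(\cdot)$; if that implied openness in $\mathcal{F}_\lam\times\mathcal{F}_\alpha$, then every product of orbits would be open, which is false: for example $k_\lam k_\lam=k_\lam$ is a closed orbit of $\mathcal{F}_\lam\times\mathcal{F}_\lam$, and it is not open unless $\lam$ has a single nonzero part.

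The surrounding reductions in your argument are fine: since $e_{A_1}$ is dense, $e_{A_1}\leq e_{B_1}$ holds for every $e_{B_1}$; the product here is never zero because both middle flags have type $\mu$; and $e_{A_1}e_{A_2}\leq o$ with $o$ open does force $e_{A_1}e_{A_2}=o$, since an orbit whose closure contains the dense orbit must itself be the dense orbit. What is missing is a non-circular witness: for an arbitrary $e_{A_2}$ you must exhibit orbits $e_{B_1}\subseteq\mathcal{F}_\lam\times\mathcal{F}_\mu$ and $e_{B_2}$ with $e_{A_2}\leq e_{B_2}$ whose product is actually the dense orbit of $\mathcal{F}_\lam\times\mathcal{F}_\alpha$ --- for instance by an explicit matrix computation using Lemmas \ref{FundMult} and \ref{DFundMult} together with Lemma \ref{openorbit}, or by the geometric argument of \cite{JS} itself, where this ``open times anything is open'' statement is established independently (cf. also Corollary 9.4 of \cite{JS}, which the present paper invokes elsewhere). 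As written, the ``in particular'' clause has not been proved.
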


For any $n\times n$-matrix $A=(a_{ij})_{ij}$ and any $ s, t\in \{1, \dots, n\}$, define $A_{NE, s,t}$ and
$A_{SW, s, t}$
to be the sum of the entries in the {\it north-east compartment} and {\it south-west compartment} with respect to the $(s, t)$-position,
respectively. That is,
$$A_{NE, s, t}=\sum_{x\leq s, y\geq t}a_{xy}\; \mbox{ and }
\;A_{SW, s,t}=\sum_{x\geq s, y\leq t}a_{xy}.
 $$
The numbers $A_{NE, s, t}$ and $A_{SW, s, t}$ characterise
degeneration of orbits.

\begin{proposition} [Proposition 5.4 \cite{DP}]\label{Dujie}
Let $e_A,e_B\subseteq \mathcal{F}_\lambda\times \mathcal{F}_\mu$. Then
$e_A\leq e_B$ if and only if $B_{NE, s, t}\leq A_{NE, s, t}$ and
$B_{SW, s, t}\leq A_{SW, s, t}$ for all $s, t$.
\end{proposition}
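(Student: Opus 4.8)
The plan is to deduce Proposition~\ref{Dujie} from the characterization of orbit closures for quiver representations, using the translation between $\gl(V)$-orbits in $\mathcal{F}\times\mathcal{F}$ and representations of $\Gamma$ recalled in Section~2. Recall that an orbit $e_A$ corresponds to a representation $N_A$ of $\Gamma$ which is projective over both $\Lambda_L$ and $\Lambda_R$, with multiplicity of the indecomposable $N_{ij}$ equal to $a_{ij}$; degeneration $e_A\le e_B$ (i.e. $e_B$ in the closure of $e_A$) translates exactly to the degeneration $N_A\le N_B$ in the representation variety with fixed dimension vector. Since $\Gamma$ is of type $\mathbb{A}_{2n-1}$, its representation variety is a union of orbits of a product of general linear groups acting on the space of representations with a fixed dimension vector, and degeneration for Dynkin quivers is governed by the well-known rank (or generic hom/ext) criterion: $N_A\le N_B$ if and only if $\dim\hom(X,N_A)\le \dim\hom(X,N_B)$ for every indecomposable $X$, equivalently $\dim\hom(N_A,X)\le\dim\hom(N_B,X)$, equivalently the ranks of all compositions of arrow-maps do not increase. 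So the real content is to identify, for our particular $N_A$, the right set of ``rank'' inequalities and to match them with the numbers $A_{NE,s,t}$ and $A_{SW,s,t}$.

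First I would set up the bookkeeping: for $N=\bigoplus_l N_{i_l j_l}$ the dimension of $N$ at vertex $p_L$ of $\Lambda_L$ and at vertex $q_R$ of $\Lambda_R$ can be written as partial sums of the $a_{ij}$, and more importantly the rank of the structure map $N_{p_L}\to N_{(p+1)_L}\to\cdots\to N_n$ (the composite along $\Lambda_L$ up to the branch vertex) and similarly along $\Lambda_R$ are computed from the $N_{ij}$ summands. Concretely, $N_{ij}$ contributes to the image of $N_{p_L}\to N_n$ exactly when $i\le p$ (its left support reaches past vertex $p$), and to the image of $N_{q_R}\to N_n$ exactly when $j\le q$. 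Pushing this through, the dimension of the intersection $\mathrm{im}(N_{p_L}\to N_n)\cap\mathrm{im}(N_{q_R}\to N_n)$ should come out to be $\sum_{i\le p,\ j\le q}a_{ij}=A_{NW,p,q}$, and the analogous span/quotient computations recover $A_{NE,s,t}$ and $A_{SW,s,t}$ as ranks of the natural maps between these images. Since $r=\sum a_{ij}$ is fixed, the north-west sums and the north-east/south-west sums determine each other by complementation, so it suffices to phrase everything in terms of the NE and SW corners as in the statement.

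Next I would invoke the rank criterion for degeneration of representations of type $\mathbb{A}_{2n-1}$: $N_A$ degenerates to $N_B$ iff for every ``interval'' sub- and quotient-test the rank does not increase, and show that for representations that are projective over both $\Lambda_L$ and $\Lambda_R$ with prescribed dimension at $n$ the only nontrivial tests are exactly those recording $\mathrm{rank}$ of the composites $N_{p_L}\to N_n\to N_{q_R}$ (the two-sided through-maps). The key point — and the place where I would be careful — is that a priori the rank criterion involves Hom-dimensions with \emph{all} indecomposables of $\Gamma$, including those not of the special ``two projective halves glued'' shape; I must argue that, because $N_A$ and $N_B$ both lie in the closed(?) — actually locally closed — subvariety of representations projective over each arm, these extra inequalities are automatically implied by, or vacuous relative to, the corner-sum inequalities. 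This is essentially the combinatorial heart of \cite{DP} Prop.~5.4, so in the paper I would either cite it directly (as the statement already does) or, if a self-contained argument is wanted, reduce to the Abeasis--Del~Fra / Bruhat-order description of orbit closures of flags, where the corner sums are exactly the defining rank functions.

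The main obstacle, then, is not the soft structure (that degeneration $=$ orbit closure $=$ rank conditions is standard for Dynkin quivers and for double flags) but the precise combinatorial translation: verifying that $A_{NE,s,t}$ and $A_{SW,s,t}$ are exactly the rank functions that cut out orbit closures, with no redundant and no missing inequalities, and checking the boundary cases $s=1$, $t=1$, $s=n$, $t=n$ where the corners degenerate to a full row or column sum. Once the dictionary ``corner sum $=$ rank of a through-map'' is nailed down, both directions follow: if $e_A\le e_B$ then ranks can only drop under specialization, giving the inequalities; conversely, the rank criterion for type $\mathbb{A}$ promotes the inequalities back to a closure relation. I would present the forward direction as an immediate consequence of semicontinuity of rank and spend the bulk of the write-up on the converse, leaning on Proposition~\ref{Dujie}'s source \cite{DP} for the step that the corner inequalities suffice.
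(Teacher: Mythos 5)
The paper itself does not prove this statement: Proposition \ref{Dujie} is imported verbatim from Du and Parshall \cite{DP}, so there is no internal proof to compare yours against. Judged on its own terms, the forward direction of your outline is sound: the dictionary $a_{ij}=$ multiplicity of $N_{ij}$, the telescoping identity $\dim(V_s\cap V'_t)=\sum_{i\le s,\,j\le t}a_{ij}$, and upper semicontinuity of these intersection dimensions give the inequalities whenever $e_B$ lies in the closure of $e_A$; and since closures stay inside a fixed $\mathcal{F}_\lam\times\mathcal{F}_\mu$, the fixed row and column sums let you pass between the north-west sums and the stated conditions on $A_{NE,s,t}$ and $A_{SW,s,t}$, exactly as you indicate.

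As a proof, however, there is a genuine gap, which you flag yourself: the converse — that the corner-sum inequalities suffice to put $e_B$ in the closure of $e_A$ — is precisely the content of \cite{DP} (equivalently, of the classical rank-matrix description of closures of $\gl(V)$-orbits in a product of two partial flag varieties), and you defer it to that source rather than prove it; so the attempt is a citation plus a sketch, not an independent argument. Two further points would need care in a self-contained write-up: (i) the identification of $\gl(V)$-orbit closure in $\mathcal{F}\times\mathcal{F}$ with degeneration in the representation variety of $\Gamma$ requires the usual associated-fibration argument on the open locus of arm-wise flag representations (this is the same translation underlying Lemmas \ref{closedorbit} and \ref{openorbit} from \cite{JS}); and (ii) your phrase about the ``rank of the composites $N_{p_L}\to N_n\to N_{q_R}$'' is not literally meaningful for the orientation of $\Gamma$, since both arms point into the vertex $n$ and no such composite exists; the correct invariant is the one you wrote earlier, $\dim\bigl(\mathrm{im}(N_{p_L}\to N_n)\cap\mathrm{im}(N_{q_R}\to N_n)\bigr)$, i.e.\ a $\hom$-dimension with the interval module running from $p_L$ to $q_R$, and one must then check that the remaining interval modules impose no further conditions — which is again the step you outsource to \cite{DP}.
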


\begin{example} \label{Ex2.6}
Let

\[A=\left(
\begin{matrix}0 &1& 0 \\ 1& 1& 0 \\0& 0& 3
\end{matrix}\right),
\mbox{ }
B=\left(
\begin{matrix}
1 & 0&0\\
0 &0 &2\\
0 & 2 &1
\end{matrix}\right)
\mbox{ and }
C=\left(
\begin{matrix}
0 & 0&1\\
0 &0 &2\\
1 & 2 &0
\end{matrix}\right).
\]

Then $A_{NE, 2, 2}=2$ and $A_{SW, 1, 2}=3$.
Following Proposition \ref{Dujie}, it is easy to see that $e_C$ degenerates to $e_A$ and $e_B$,  and
that there is no degeneration between $e_A$ and $e_B$.
\end{example}

\subsection{Open and closed orbits.}

We recall the construction of the unique open and unique closed orbit in 
$\mathcal{F}_\lam \times \mathcal{F}_\mu$.
Let $\{v_1,\cdots,v_r\}$ be an ordered basis of the vector space $V$. The following lemma
is a reformulation of Lemma 8.3  and 8.4 in \cite{JS} in terms of flags.

\begin{lemma} \label{closedopenorbit}
Let $(f,f')\in \mathcal{F}_\lam \times \mathcal{F}_\mu$.
\begin{itemize}
\item[(1)] If
$V_i=\mathrm{span}\{v_1,\cdots, v_{n_i}\}$ and $V'_i=\mathrm{span}\{v_1,\cdots, v_{m_i}\}$, where $n_i-n_{i-1}=\lam_i$ and
$m_i-m_{i-1}=\mu_i$,
then $[f,f']$ is closed.

\item[(2)] If
$V_i=\mathrm{span}\{v_1,\cdots, v_{n_i}\}$ and $V'_i=\mathrm{span}\{v_{m_i},\cdots, v_r\}$, where $n_i-n_{i-1}=\lam_i$ and
$m_{i-1}-m_{i}=\mu_i$,
then $[f,f']$ is open.
\end{itemize}
\end{lemma}

The open orbit in $\mathcal{F}_\lam\times \mathcal{F}_\mu$ is denoted by ${_\lam}o_\mu$ and we let
$o_\lam={_\lam}o_\lam$. The closed orbit in $\mathcal{F}_\lam\times \mathcal{F}_\mu$ is denoted by
${_\lam}k_{\mu}$ and ${_\lam}k_{\lam}=k_\lam$.

We say that a matrix $A$ is {\it open} (resp. {\it closed}) if the corresponding orbit $e_A$
is open (resp. closed). The following lemma is a direct application of the characterisation of 
degeneration in Proposition \ref{Dujie}, and is a reformulation of Lemma \ref{closedopenorbit} in 
terms of matrices.

\begin{lemma} \label{openmatrix}
Let $e_A\subseteq \mathcal{F} \times \mathcal{F}$.
\begin{itemize}
\item[(1)] $e_A$ is open if and only if every $2\times 2$-submatrix has at least one zero diagonal entry.
\item[(2)] $e_A$ is closed if and only if every $2\times 2$-submatrix has at least one zero anti-diagonal entry.
\end{itemize}
\end{lemma}

\subsection{Idempotent orbits}

A composition $\lam$ that has only non-zero entries is also called
a {\it decomposition}. Let $D(n)$ denote the set of decompositions of $n$. 
We denote a decomposition of $n$ by $\underline{m}=(m_1,\cdots,m_s)$.
Associated to each pair $(\lambda,\underline{m})$, where $\lambda=(\lambda_1, \dots,  \lambda_n)\in\Lambda(n,r)$ and
$\underline{m}\in D(n)$, there is 
an idempotent orbit in $S_0(n,r)$ denoted by $o_{\lambda, \underline m}$. We recall the construction 
from \cite{JS}.

A matrix $A$ is said to be {\it blocked} with respect to $\underline m$, if $A$ is a 
direct sum of matrices $A_v$ of size $m_v\times m_v$. Each block $A_v$ determines 
an orbit $$e_{A_v}\in S(m_v,r_v),$$ where $r_v$ is the sum of the entries in $A_v$.
If $B$ is another matrix blocked with respect to $\underline m$, then $e_A\cdot e_B$
can be computed blockwise.

Now, let the matrix of the idempotent $o_{\lambda, \underline m}$ be the direct sum of 
matrices of open orbits $o_{\lam^v}$, where
$\lambda^v=(\lam_{ \sum_{j=1}^{v-1}m_j+1 },  \dots, \lam_{ \sum_{j=1}^{v}m_j})$. 
In particular, 
\[
o_{\lam, \underline m}=\left\{ \begin{tabular}{ll} $k_\lam$ & {if} $\underline m=(1, \dots, 1)$, \\
$o_\lam$ & {if} $\underline m=(n)$, \\
$f_i^{\lam_{i+1}}e_{i}^{\lam_{i+1}}k_\lambda=e_i^{\lam_i}f_{i}^{\lam_{i}}k_\lambda$ & if $\underline m=(1, \dots, 1, 2, 1, \dots, 1)$,
\end{tabular}
\right.
\]
where in the last case $m_i=2$ and $m_j=1$ for $j\neq i$. 

\begin{example} \label{Ex2.3}
Let $\lam=(1, 2, 3)$, $\underline l=(2, 1)$ and $\underline m=(1, 2)$. Then
$o_{\lam, \underline l}$, $o_{\lam, \underline m}$ and $o_\lam$ correspond to the matrices, 
\[\left(
\begin{matrix}0 &1& 0 \\ 1& 1& 0 \\0& 0& 3
\end{matrix}\right),
\mbox{ }
\left(
\begin{matrix}
1 & 0&0\\
0 &0 &2\\
0 & 2 &1
\end{matrix}\right)
\mbox{ and }
\left(
\begin{matrix}
0 & 0&1\\
0 &0 &2\\
1 & 2 &0
\end{matrix}\right).
\]
\end{example}

\section{Classification of idempotent orbits}

In this section we will show that the only idempotent orbits in $S_0(n,r)$ are the orbits
$o_{\lam,\underline m}$. More precisely, we will prove that $e_A^2\neq e_A$ for any $e_A$ 
not of the form $o_{\lambda,\underline{m}}$, by constructing a proper degeneration from 
$e_A^2$ to $e_A$. 
We start with an easy observation on the symmetry of north-east and south-west compartments.

\begin{lemma}\label{areas}
Let $e_A\subseteq  \mathcal{F}_\lam\times \mathcal{F}_{\lam}$.
Then $A_{NE, s, s+1}=A_{SW, s+1, s}$ for all $s$.
\end{lemma}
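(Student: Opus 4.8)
The plan is to prove the identity by expressing both sides in terms of the column sums of $A$, using the hypothesis that the row vector and column vector of $A$ coincide (which is what $e_A \subseteq \mathcal{F}_\lam \times \mathcal{F}_\lam$ gives us: $\ro(A) = \lam = \co(A)$). The key observation is that $A_{NE,s,s+1}$ and $A_{SW,s+1,s}$ are complementary regions whose complement is easy to describe. Indeed, the entry $a_{xy}$ lies in the north-east corner at $(s,s+1)$ precisely when $x \le s$ and $y \ge s+1$, while it lies in the south-west corner at $(s+1,s)$ precisely when $x \ge s+1$ and $y \le s$. These two regions are disjoint, and together with the two ``strips'' $\{x \le s,\ y \le s\}$ and $\{x \ge s+1,\ y \ge s+1\}$ they partition the whole index set $\{1,\dots,n\}^2$.

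First I would write $r = \sum_{x,y} a_{xy}$ (the total sum of entries), split it according to this partition, and obtain
\[
r = A_{NE,s,s+1} + A_{SW,s+1,s} + \sum_{x\le s,\, y\le s} a_{xy} + \sum_{x\ge s+1,\, y\ge s+1} a_{xy}.
\]
Next I would compute the two ``square'' blocks in terms of marginals. The top-left $s\times s$ block has total $\sum_{x\le s} \sum_{y\le s} a_{xy}$. Using $\co(A) = \lam$, the first $s$ column sums give $\sum_{y\le s}\sum_x a_{xy} = \lam_1 + \cdots + \lam_s$; subtracting the part of those columns lying in rows $x \ge s+1$, which is exactly $A_{SW,s+1,s}$, yields $\sum_{x\le s,\,y\le s} a_{xy} = (\lam_1+\cdots+\lam_s) - A_{SW,s+1,s}$. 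Symmetrically, using $\ro(A) = \lam$, the bottom-right block equals $(\lam_{s+1}+\cdots+\lam_n) - A_{NE,s,s+1}$, since the last $n-s$ row sums total $\lam_{s+1}+\cdots+\lam_n$ and the part of those rows in columns $y \le s$ is precisely $A_{SW,s+1,s}$ — wait, one must be careful here and instead peel off columns $y\ge s+1$ from rows $x\ge s+1$ by removing the rows-$\ge s+1$, columns-$\le s$ piece, which is again $A_{SW,s+1,s}$; alternatively peel the $A_{NE}$ piece off the last $n-s$ columns. I would pick whichever bookkeeping is cleanest and do it consistently.

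Substituting these two expressions back into the decomposition of $r$ gives
\[
r = A_{NE,s,s+1} + A_{SW,s+1,s} + \big[(\lam_1+\cdots+\lam_s) - A_{SW,s+1,s}\big] + \big[(\lam_{s+1}+\cdots+\lam_n) - A_{NE,s,s+1}\big],
\]
and since $(\lam_1+\cdots+\lam_s)+(\lam_{s+1}+\cdots+\lam_n) = r$, the four terms on the right collapse and the identity $A_{NE,s,s+1} = A_{SW,s+1,s}$ drops out. The only real care needed is in the second paragraph: making sure the complementary region subtracted from each square block is genuinely the \emph{same} set on both sides (namely the rows-$\ge s+1$, columns-$\le s$ block for one and whichever symmetric block for the other), so that the cancellation is exact rather than merely approximate. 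I expect this bookkeeping to be the main — and only — obstacle; there is no conceptual difficulty, just the need to choose a partition of the grid and track marginals correctly. An alternative, perhaps slicker, route is to apply Proposition \ref{Dujie}-style reasoning or simply to note directly that $\sum_{y\ge s+1}(\text{column sum } y) - \sum_{x\le s,\,y\ge s+1}a_{xy} = \sum_{x\ge s+1,\,y\ge s+1}a_{xy}$ and the mirror statement for rows, then equate; but the counting argument above is the most transparent.
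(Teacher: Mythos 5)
Your underlying idea --- partition the index set into the two corners and the two square blocks and exploit $\ro(A)=\co(A)=\lam$ --- is exactly what the lemma rests on (the paper's own proof is just the one-line remark that the row and column vectors agree). However, the computation you actually display does not prove the identity: with the top-left block written as $(\lam_1+\cdots+\lam_s)-A_{SW,s+1,s}$ and the bottom-right block written as $(\lam_{s+1}+\cdots+\lam_n)-A_{NE,s,s+1}$, both corners cancel on the right-hand side and the displayed equation collapses to $r=r$. Nothing ``drops out''; that equation holds for \emph{every} matrix with those marginals, because in this form the two corners are never coupled. You half-notice this (``one must be careful here\dots'') but then write down precisely the tautological version and claim the conclusion follows from it, which is a genuine gap in the argument as written.

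The fix is the one you gesture at: the \emph{same} corner must be subtracted from both square blocks, which is where the hypothesis $\ro(A)=\co(A)$ actually enters. For instance, compute the top-left block from the first $s$ \emph{column} sums, $\sum_{x\le s,\,y\le s}a_{xy}=(\lam_1+\cdots+\lam_s)-A_{SW,s+1,s}$, and the bottom-right block from the last $n-s$ \emph{row} sums, $\sum_{x\ge s+1,\,y\ge s+1}a_{xy}=(\lam_{s+1}+\cdots+\lam_n)-A_{SW,s+1,s}$; then the four-region decomposition of $r$ gives $r=A_{NE,s,s+1}+A_{SW,s+1,s}+r-2A_{SW,s+1,s}$, i.e.\ $A_{NE,s,s+1}=A_{SW,s+1,s}$. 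Simpler still (and this is your closing ``alternative,'' which is correct and needs no partition of $r$ at all): the first $s$ row sums and the first $s$ column sums both equal $\lam_1+\cdots+\lam_s$, and they equal the top-left $s\times s$ block plus $A_{NE,s,s+1}$ and plus $A_{SW,s+1,s}$ respectively; subtracting gives the lemma directly. I would discard the four-region bookkeeping entirely and just write this two-line comparison.
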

\begin{proof}
The lemma follows from the fact that the row and column vectors of $A$ are equal.
\end{proof}

The degeneration from $e_A^2$ to $e_A$ is constructed using the following technical lemma.

\begin{lemma}\label{SpecDeg}
Let $e_A\subseteq \mathcal{F}_\lam\times \mathcal{F}_{\lam}$ with $\lambda_i\lambda_s>0$, where
$i<s$. Suppose 
\begin{itemize}
\item[(1)] $a_{lm}=0$ for $l>m$ and $s>l>i$;
\item[(2)] $a_{uv}\geq 1$ for some $(u, v)$ with  $u\leq i$ and $v\geq s$.
\end{itemize}
Then  $$e_A\leq f_{s-1}\dots f_{i}e_{i}\dots e_{s-1}k_\lambda.$$
\end{lemma}

\begin{proof}
By the fundamental multiplication rules 
$$
f_{s-1}\dots f_{i}e_{i}\dots e_{s-1}k_\lambda=e_B
$$
with 
$$B=\mathrm{diag}(\lambda_1, \dots, \lambda_n)-E_{ii}-E_{ss}+E_{is}+E_{si}.$$
By Proposition  \ref{Dujie}, to prove $e_A\leq e_B$, we need to show that for any $l, m$,
$$A_{NE, l, m}\geq B_{NE, l, m} \;\mbox{ and }\;A_{SW, l, m}\geq B_{SW, l, m}. $$

First, we  compare the north-east compartments. There are two cases to consider.

(NE1) $l< m$. Then
$$B_{NE, l, m}=\left\{\begin{tabular}{ll}1 &  if $i\leq l<m\leq s$, \\  $0$ & otherwise. \end{tabular} \right. $$ 
If $i\leq l<m\leq s$, then $A_{NE, l, m}\geq a_{uv}\geq 1$, by (2). 
So $A_{NE, l, m}\geq B_{NE, l, m}$.

(NE2) $l\geq m$.
Then
\begin{align*}
 A_{NE, l, m}& =(A_{NE, l, m}-A_{NE, m-1, m})+A_{NE, m-1, m} \\
 &= (A_{NE, l, m}-A_{NE, m-1, m})+A_{SW, m, m-1}  \mbox{ (by Lemma } \ref{areas})\\
&= (A_{NE, l, m}-A_{NE, m-1, m})+(A_{SW, m, m-1}-A_{SW, l+1, m-1})+A_{SW, l+1, m-1}\\
&=\sum_{m\leq x\leq l}\sum_ya_{xy} +A_{SW, l+1, m-1}\\
&=\lam_m +\dots +\lam_l+A_{SW, l+1, m-1}
\end{align*}
If $i<m\leq l<s$, then $A_{SW,l+1,m-1}=A_{SW,m,m-1}$ by (1), and $A_{SW,m,m-1}=A_{NE,m-1,m}\geq a_{uv}\geq 1$ by
Lemma \ref{areas} and (2). So $A_{NE,l,m}\geq \lambda_m+\cdots\lambda_l+1=B_{NE,l,m}$. Otherwise,
$A_{NE,l,m}\geq \lambda_m+\cdots+\lambda_l=B_{NE,l,m}$.

Next, we compare the south-west compartments. 

(SW1) $l>m$.  Then
$$B_{SW, l, m}=\left\{\begin{tabular}{ll}1 &  if $i\leq m<l\leq s$, \\  $0$ & otherwise. \end{tabular} \right. $$ 
We may assume  $i\leq m < l \leq s$. Then
\begin{align*}
A_{SW, l, m}& =A_{SW, m+1, m} \mbox{ (by (1)) }\\
& = A_{NE, m, m+1} \mbox{ (by Lemma \ref{areas})}\\
& \geq a_{uv}\geq 1=B_{SW, l, m}.
\end{align*}

(SW2) $l\leq m$. By an argument similar to (NE2), $A_{SW, l, m}\geq B_{SW, l, m}$.

This finishes the proof.
\end{proof}

We now prove the main result of this section.

\begin{proposition}\label{orbitidem} Let $e_A\subseteq  \mathcal{F}_\lam\times \mathcal{F}_{\lam}$.
Then $e_A$ is an idempotent if and only if $e_{A}=o_{\lam, \underline m}$ for some $\underline m\in D(n)$.
\end{proposition}

\begin{proof}  First, by Lemma 9.12 in \cite{JS},
$o_{\lam, \underline m}$  is an idempotent. 
Now assume that
$e_A\not= o_{\lam, \underline m}$ for any decomposition  $\underline m$. 
Note that if $A$ is blocked, then $e_A\cdot e_A$ can be computed blockwise. It follows
by induction on the size of $A$, that $e_A$ is not an idempotent. So we assume that
$A$ is not blocked. In particular, $\lambda_1\not= 0$. Let $c$ be maximal with
$a_{1c}\not = 0$. There are two cases to consider.

(1) $c<n$. As $A$ is not blocked, we must have $a_{st}\not=0$ for some $(s,t)$ with $s\leq c< t$ or $s>c\geq t$. In fact by 
the symmetry in Lemma \ref{areas}, there is such $(s,t)$ with $s\leq c< t$. 
Choose $(s,t)$ with $s\leq c < t$ and $(i,j)$ with $i,j\leq c$ such that 
$a_{ij}a_{st}\neq 0$ and $s-i>0$ minimal 
with respect to these properties. By Lemma \ref{SpecDeg}, with 
$(u,v)=(1,c)$, $$e_A\leq f_{s-1}\dots f_ie_i\dots e_{s-1}k_\lambda.$$ 

(2) $c=n$. As $A$ is not open, by Lemma \ref{openmatrix}, there is a $2\times 2$-submatrix with both
diagonal entries $a_{ij}$ and $a_{st}$
different from  zero. Choose
the submatrix such that $s-i>0$ is minimal. Using Lemma \ref{SpecDeg} with $(u,v)=(1,n)$, we have
$$e_A\leq f_{s-1}\dots f_ie_i\dots e_{s-1}k_\lambda.$$

In both cases $$e_A^2\leq f_{s-1}\dots f_ie_i\dots e_{s-1}k_\lambda\cdot e_A,$$ by Lemma \ref{DegMult}.
By the fundamental multiplication rules $f_{s-1}\dots f_ie_i\dots e_{s-1}e_A\not = e_A$, and so there is 
a proper degeneration from $e_A^2$ to $e_A$.  
Therefore $e_A$ is not an idempotent. 
\end{proof}

\section{Projective modules generated by idempotent orbits}

Since $o_{\lam,\underline{m}}$ is an idempotent, the left $\zschur$-module 
$\zschur o_{\lam,\underline{m}}$ is projective. In this section we determine when
two such projective modules are isomorphic, compute bases of these modules,
and study homomorphisms between them.

\subsection{A filtration of $\zschur k_\lambda$.}

Let $\underline p, \underline m\in D(n)$ with $\underline p=(p_1,\cdots,p_t)$ and 
$\underline m=(m_1,\cdots,m_s)$. We write 
$\underline p\leq \underline m$ if $\underline m$
is a {\it refinement} of $\underline p$, i.e., there exist $0=i_0<i_1<\cdots<i_t=s$ such that 
$$p_l=\sum_{i_{l-1}<j\leq i_l}m_j$$ for $l=1,\cdots,t$.
The order $\leq$ gives $D(n)$ the structure of a poset.

\begin{example}
Let $n=8$ and $\underline m = (1,1,3,2,1)$. Then $\underline m$ is a refinement of both 
$(1+1,3,2+1)=(2,3,3)$ and $(1,1,3+2+1)=(1,1,6)$, but not a refinement of 
$(4,3,1)$.
\end{example}

Two decompositions $\underline m$ and $\underline p$ can produce identical 
idempotents for some $\lam$. A
decomposition $\underline p$ is said to be {\it maximal} with respect to $\lam$, if there
is no $\underline m>\underline p$ with $o_{\lambda,\underline m}=o_{\lambda,\underline p}$.
Note that for any 
$o_{\lambda,\underline m}$ there is a unique $\underline m'$ which is maximal with respect to $\lambda$ 
such that $o_{\lambda,\underline m}=o_{\lambda,\underline m'}$. So for any $\lam$, there is a bijection 
between the set of idempotents $o_{\lam,\underline p}$ 
and the set of decompositions $\underline p$ of $n$ which are maximal with respect to $\lam$. All
decompositions are maximal with respect to a composition without zero entries. 

\begin{example}
Let $r=5$ and $\lambda = (0,1,1,0,2,0,1,0)$. Then $\underline m = (2,3,3)$ is not maximal with respect
to $\lambda$. The entries
$2$, $3$ and $3$ in $\underline m$ correspond to the subsequences $(0,1)$, $(1,0,2)$ and $(0,1,0)$ of $\lambda$.
The maximal decomposition is obtained by splitting of zeroes at the front and back of these subsequences, and so
$\underline m'=(1,1,3,1,1,1)$ with entries corresponding to the subsequences $(0), (1), (1,0,2), (0), (1)$ and $(0)$
of $\lambda$.
\end{example}

\begin{lemma}\label{symmidem}
If $e_A=o_{\lam,\underline m}$, then $A$ is symmetric.
\end{lemma}
\begin{proof}
The diagonal blocks of $A$ are open matrices. That open matrices are symmetric follows from
the construction of open orbits in Lemma \ref{closedopenorbit}.
\end{proof}

\begin{lemma} \label{ordereq}
Let $\lambda\in \Lambda(n, r)$ and $\underline p,\underline m\in D(n)$. If $\underline m$
is maximal with respect to $\lam$, then the following are equivalent.
\begin{itemize}
\item[(1)] $\underline p\leq \underline m$.
\item[(2)] $o_{\lambda, \underline p}\leq o_{\lambda,\underline m}$.
\item[(3)] $o_{\lambda,\underline p}o_{\lambda,\underline m}=o_{\lambda,\underline p}$.
\item[(4)] $o_{\lambda,\underline m}o_{\lambda,\underline p}=o_{\lambda,\underline p}$.
\end{itemize}
\end{lemma}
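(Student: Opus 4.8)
I would prove the cycle of implications i) $\Rightarrow$ iii) $\Rightarrow$ ii) $\Rightarrow$ i), and separately i) $\Rightarrow$ iv) $\Rightarrow$ ii), or more economically establish i) $\Leftrightarrow$ iii), i) $\Leftrightarrow$ iv), and iii)/iv) $\Rightarrow$ ii) $\Rightarrow$ i). The implication ii) $\Rightarrow$ i) is the conceptual heart and is where I expect the main obstacle to lie; the others are comparatively mechanical given the tools already assembled in the excerpt.

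For i) $\Rightarrow$ iii) and i) $\Rightarrow$ iv): assuming $\underline{p}\leq\underline{m}$, we know $o_{\lam,\underline p}\leq o_{\lam,\underline m}$ in the degeneration order (remarked just before the statement). Since $o_{\lam,\underline p}$ is an idempotent, $o_{\lam,\underline p} = o_{\lam,\underline p}^2 \leq o_{\lam,\underline p}o_{\lam,\underline m}$ by Lemma~\ref{DegMult}. But $o_{\lam,\underline p}o_{\lam,\underline m}$ lies in $\mathcal{F}_\lam\times\mathcal{F}_\lam$ and, being a product with an idempotent factor, one checks via the fundamental multiplication rules (Lemma~\ref{DFundMult}/\ref{FundMult}) that it is again $o_{\lam,\underline \nu}$ for some decomposition $\underline\nu$ — indeed $o_{\lam,\underline m}$ acts as identity on the blocks it refines, so $o_{\lam,\underline p}o_{\lam,\underline m}$ keeps the generic ``block'' shape and equals $o_{\lam,\underline p}$ exactly when nothing strictly smaller is forced. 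Minimality of $\underline p$ is what pins it down: any degeneration of $o_{\lam,\underline p}$ landing on an idempotent of $\mathcal{F}_\lam\times\mathcal{F}_\lam$ and dominated by it must be $o_{\lam,\underline p}$ itself, since a strictly smaller idempotent orbit would correspond (Lemma~\ref{orbitidem}) to a coarser decomposition $\underline p'<\underline p$ with $o_{\lam,\underline p'}=o_{\lam,\underline p}$, contradicting minimality. The same argument on the other side gives iv).

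For iii) $\Rightarrow$ ii) (and iv) $\Rightarrow$ ii)): if $o_{\lam,\underline p}o_{\lam,\underline m}=o_{\lam,\underline p}$, then applying Lemma~\ref{DegMult} to $o_{\lam,\underline p}\leq o_{\lam,\underline p}$ and $o_{\lam,\underline p}\leq o_{\lam,\underline m}$... — more directly, $o_{\lam,\underline p} = o_{\lam,\underline p}o_{\lam,\underline m} \leq k_\lam \cdot o_{\lam,\underline m}$? That is too weak; instead note $o_{\lam,\underline p}=o_{\lam,\underline p}o_{\lam,\underline m}$ exhibits $o_{\lam,\underline p}$ as a product in which $o_{\lam,\underline m}$ appears, and one compares the $A_{NE}$/$A_{SW}$ statistics of the associated matrices using Proposition~\ref{Dujie}: the block-diagonal matrix of $o_{\lam,\underline m}$ has north-east corner sums that are minimal among generic representatives with row/column vector $\lam$ supported on the $\underline m$-blocks, so the equation forces the matrix of $o_{\lam,\underline p}$ to dominate that of $o_{\lam,\underline m}$ corner-wise, i.e. $o_{\lam,\underline p}\leq_{deg}o_{\lam,\underline m}$.

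**The main obstacle: ii) $\Rightarrow$ i).** Here I would argue contrapositively or by a direct combinatorial comparison. If $o_{\lam,\underline p}\leq_{deg}o_{\lam,\underline m}$, I want to conclude that $\underline m$ refines $\underline p$. The matrices involved are block-diagonal with generic (zigzag) open-orbit blocks determined by the restrictions $\lam^i$ of $\lam$; Lemma~\ref{symmidem} tells us these block matrices are symmetric. The key point is to read off, from the corner-sum inequalities $B_{NE,s,t}\leq A_{NE,s,t}$ and $B_{SW,s,t}\leq A_{SW,s,t}$ (Proposition~\ref{Dujie}), that every block boundary of $\underline m$ occurs at a position where the matrix of $o_{\lam,\underline p}$ also has no ``flow'' across it — equivalently, a refinement relation. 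Concretely, a decomposition $\underline q$ is recovered from its idempotent matrix as the set of $s$ with $A_{NE,s,s+1}=0$, together with the minimality normalization; so I must show that $B_{NE,s,s+1}=0$ forces $A_{NE,s,s+1}=0$ whenever $\underline m$ has a block break at $s$, using minimality of $\underline p$ to rule out spurious breaks. I'd expect to need Lemma~\ref{areas} to handle the symmetry $A_{NE,s,s+1}=A_{SW,s+1,s}$ and a short induction on $n$ or on the number of blocks. The delicate part is correctly handling the ``minimal with respect to $\lam$'' hypothesis — when $\lam$ has zero entries, distinct decompositions can give the same idempotent, and the whole equivalence is only claimed for minimal representatives, so every step must be checked to respect that normalization.
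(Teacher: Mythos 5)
Your outline has the right tools (Proposition \ref{Dujie}, Lemmas \ref{DegMult}, \ref{symmidem}, \ref{areas}), but two of the steps do not hold up as written. First, i) $\Rightarrow$ iii)/iv): your argument rests on the unjustified claim that $o_{\lam,\underline p}o_{\lam,\underline m}$ is again an idempotent of the form $o_{\lam,\underline \nu}$, pinned down by minimality. A single product of two idempotent orbits need not be idempotent (this is exactly why, in Lemma \ref{intersection}, one must pass to a high power $(o_{\lam,\underline m}o_{\lam,\underline p})^a$ before Lemma \ref{orbitidem} can be applied), and the minimality hypothesis only says that \emph{refinements} of $\underline p$ change the idempotent; it does not say that $o_{\lam,\underline p}$ admits no idempotent degenerations other than itself ($k_\lam$ is always one), nor even that coarsenings change it: for $\lam=(1,1,0)$ the decomposition $\underline p=(2,1)$ is minimal, yet $o_{\lam,(3)}=o_{\lam,(2,1)}$. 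The paper closes this step by quoting Lemmas 9.11 and 9.12 of \cite{JS} (giving i) $\Rightarrow$ ii) $\Rightarrow$ iii)); a self-contained substitute is the block-genericity argument that reappears in Proposition \ref{CharProj}(3): when $\underline m$ refines $\underline p$, the matrix of $o_{\lam,\underline p}$ is column-block generic with respect to $\underline m$, whence $o_{\lam,\underline p}o_{\lam,\underline m}=o_{\lam,\underline p}$. You supply neither. (By contrast, iii) $\Rightarrow$ ii), which you leave vague, is one line: $o_{\lam,\underline p}=o_{\lam,\underline p}o_{\lam,\underline m}\leq k_\lam\, o_{\lam,\underline m}=o_{\lam,\underline m}$ by Lemma \ref{DegMult}, since every orbit in $\mathcal F_\lam\times\mathcal F_\lam$ degenerates to $k_\lam$.)

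Second, in ii) $\Rightarrow$ i) your directions are reversed relative to the paper's conventions. Here $\underline p\leq\underline m$ means $\underline m$ refines $\underline p$, and ii) with Proposition \ref{Dujie} gives $M_{NE,s,t}\leq P_{NE,s,t}$, where $P$ and $M$ are the matrices of $o_{\lam,\underline p}$ and $o_{\lam,\underline m}$; so vanishing propagates from $P$ to $M$, not the other way. The correct deduction is: at each block boundary $s$ of $\underline p$ one has $P_{NE,s,s+1}=0$, hence $M_{NE,s,s+1}=0$, and minimality of $\underline m$ (via Lemmas \ref{symmidem}, \ref{areas} and \ref{genericmatrices}, which show that an internal cut with vanishing corner sum would allow a refinement of $\underline m$ giving the same idempotent) forces $s$ to be a boundary of $\underline m$; thus every boundary of $\underline p$ is a boundary of $\underline m$, i.e. $\underline p\leq\underline m$. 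You instead try to show that every boundary of $\underline m$ is a no-flow position of $P$, invoking minimality of $\underline p$: that implication does not follow from ii) (the inequality goes the wrong way), and even if it did it would prove $\underline m\leq\underline p$ rather than i). With the directions straightened out, your corner-sum argument does yield ii) $\Rightarrow$ i) and is genuinely different from the paper's route, which proves iii) $\Rightarrow$ i) instead: assuming $\underline m$ does not refine $\underline p$, it uses minimality of $\underline m$ and Lemma \ref{symmidem} to locate a symmetric pair of nonzero entries straddling a $\underline p$-boundary, forms $e_C$ with $c_{ij}=c_{ji}=1$, and computes via Lemmas \ref{DFundMult} and \ref{DegMult} that $o_{\lam,\underline p}o_{\lam,\underline m}\leq o_{\lam,\underline p}e_C\lvertneqq o_{\lam,\underline p}$, contradicting iii). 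As it stands, however, both i) $\Rightarrow$ iii) and ii) $\Rightarrow$ i) in your proposal are gaps.
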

\begin{proof}
(1)$\Rightarrow$(2). Assume $\underline p\leq \underline m$. Then the matrix of $o_{\lambda,\underline m}$ is blocked with respect to $\underline p$
and the blocks of the matrix of $o_{\lambda,\underline p}$ are open. As degeneration and multiplication
can be computed blockwise, (2) follows from Lemma 9.11 in \cite{JS}. 

(2)$\Rightarrow$(3). Assume $o_{\lambda, \underline p}\leq o_{\lambda,\underline m}$. Then $$o_{\lambda,\underline p} =o_{\lambda,\underline p} \cdot o_{\lambda,\underline p}  \leq o_{\lambda,\underline p} \cdot 
o_{\lambda,\underline m} \leq o_{\lambda,\underline p}\cdot k_\lambda=o_{\lambda,\underline p}$$
by Lemma \ref{DegMult}. So (3) follows. 

(3)$\Rightarrow$(1). Let $e_A=o_{\lam, \underline m}$ and
$e_B=o_{\lam, \underline p}$.
Assume that $\underline m$ is not a refinement of $\underline p$ and let
$s$ be the largest integer such that $(m_1,\cdots,m_{s-1},1,\cdots,1)$ is a refinement of $\underline p$.

As $\underline m$ is maximal with respect to $\lam$,
there is a non-zero entry $a_{ij}$ in the $s$th diagonal block of $A$, 
such that $(i, j)$ is not contained in any of the diagonal blocks of $B$. 
By Lemma \ref{symmidem}, $a_{ji}=a_{ij}$, so we may
assume that $i<j$.

Let $C$ be the matrix with $\mathrm{ro}(C)=\mathrm{co}(C)=\lambda$, $c_{ij}=c_{ji}=1$ and all other 
off-diagonal entries equal to $0$.  
By the fundamental multiplication rules in Lemma \ref{DFundMult},  
$$e_C=e_i\dots e_{j-1}f_{j-1}\dots f_ik_\lambda.$$ 
Moreover, $e_A\leq e_C$, since $C$ is blocked with respect to $\underline m$, and the 
blocks of $A$ are open matrices.

As the column and row vectors of 
$B$ are also equal to $\lam$, there exist non-zero entries $b_{xi}$  and
$b_{yj}$ in 
$B$. We assume that $x$ is minimal.
Let $$e_D=e_Be_C.$$ 
The fundamental multiplication rules imply
$$d_{x''j}=1, $$
for some $x''\leq x$. As $(i, j)$ is not contained in any of the blocks of $B$, 
nor is $(x'', j)$. So $b_{x''j}=0$ and  
thus $$e_B\not= e_D.$$
By Lemma \ref{DegMult}, 
 $$e_D=e_Be_C<e_Bk_\lambda=e_B$$
and
$$o_{\lambda,\underline p}o_{\lambda,\underline m}\leq o_{\lambda,\underline p}e_C=e_B e_C=e_D < e_B=o_{\lam, \underline p}, $$
showing that (3) does not hold. This proves that (3) implies (1).

This proves that (1), (2) and (3) are equivalent. Similarly, (4) is equivalent to (1) and (2). This finishes the proof.
\end{proof}

We remark that the maximality of $\underline m$ in the above lemma is only needed for the implications (3) and (4) to (1).

\begin{lemma}\label{projectivecomp}
Let $\lambda\in \Lambda(n,r)$ and let $\underline p\leq \underline m$ be decompositions of $n$. Then 
there is a split inclusion $$\zschur o_{\lambda, \underline p} 
\subseteq  \zschur o_{\lambda, \underline m}$$ 
of projective modules.
\end{lemma}
\begin{proof}
Let $\underline m'$ be maximal with respect to $\lambda$ so that $o_{\lambda,\underline m'}=o_{\lambda,\underline m}$.
Then $\zschur o_{\lambda, \underline m'}=\zschur o_{\lambda, \underline m}$ and $p\leq m\leq m'$.
By (1) and (2) of Lemma \ref{ordereq}, we have $o_{\lambda, \underline{p}}o_{\lambda, \underline{m'}}
=o_{\lambda, \underline{p}}$, and so the inclusion follows. Right multiplication with $o_{\lambda, \underline{p}}$ gives the splitting 
of the inclusion.
\end{proof}

Let $\underline m\wedge \underline p$ be the largest
decomposition that is smaller than both $\underline m$ and $\underline p$ in the refinement order $\leq$.
Note that if $\underline p$ and $\underline m$ are maximal with respect to $\lambda$, then $\underline p\wedge \underline m$ 
is also maximal with respect to $\lambda$.

\begin{lemma} \label{intersection}
Let $\underline m, \,\underline p\in D(n)$ be maximal with respect to $\lambda\in \Lambda(n,r)$. 
Then $$\zschur o_{\lambda, \underline m}\cap \zschur o_{\lambda,\underline p}=
\zschur o_{\lambda,\underline m\wedge \underline p}.$$
\end{lemma}
\begin{proof}
Since $\underline m\wedge \underline p\leq \underline m$ and
 $\underline m\wedge \underline p \leq \underline p$, we have
$$\zschur o_{\lambda,\underline m\wedge \underline p}\subseteq \zschur
o_{\lambda,\underline m} \;\;\; \mbox{ and }\;\;\;\zschur o_{\lambda,\underline m\wedge \underline p}\subseteq \zschur
o_{\lambda,\underline p}.$$
It remains to prove that
$$\zschur o_{\lambda, \underline p}\cap \zschur o_{\lambda, \underline m }
\subseteq \zschur o_{\lambda, \underline p\wedge \underline m }.$$
There is an $a>0$ such that
$(o_{\lambda, \underline m }o_{\lambda, \underline p})^x = (o_{ \lambda,\underline m }o_{ \lambda, \underline p})^{x+1}$ for all $x\geq a$.
Therefore $(o_{\lambda, \underline m }o_{\lambda, \underline p})^a$ is an idempotent and so, by Proposition \ref{orbitidem}, equal to
 $o_{\lambda, \underline l}$ for some $\underline l$. Furthermore, 
$$o_{\lam, \underline l}o_{\lam, \underline m}=o_{\lam, \underline l}
\;\mbox{ and }\; o_{\lam, \underline l}o_{\lam, \underline p}=o_{\lam, \underline l},$$  so by Lemma \ref{ordereq},
 $$o_{\lambda, \underline l}\leq o_{\lambda, \underline m \wedge \underline p}.$$

Let $X$ be an orbit in $\zschur o_{\lambda, \underline p}\cap
\zschur o_{\lambda, \underline m }$. Then
$$Xo_{\lambda, \underline m }=X=Xo_{\lambda, \underline p}$$
and so $$X=X(o_{\lambda, \underline m }o_{\lambda, \underline p})^a.$$ This shows that 
$$X\in \zschur o_{\lambda, \underline l},$$
and therefore
$$\zschur o_{\lambda, \underline p}\cap \zschur o_{\lambda, \underline m }\subseteq \zschur o_{\lambda, \underline l}\subseteq  \zschur o_{\lambda, \underline p\wedge \underline m }
,$$
as required.
\end{proof}

\begin{lemma}\label{basisintersection}
Let $\lambda^1,\cdots,\lambda^s,\,\mu\in \Lambda(n,r)$ and $\underline m^1,\cdots,\underline m^s,\,\underline p\in D(n)$. 
Then $$(\sum_{i=1}^s\zschur o_{\lam^i, \underline m^i})\cap \zschur o_{\mu, \underline p}=
\sum_{i=1}^s(\zschur o_{\lam^i, \underline m^i}\cap \zschur o_{\mu, \underline p}).$$
\end{lemma}
\begin{proof}
The inclusion $\supseteq$ is trivial. The other inclusion follows by observing 
that all modules $\zschur o_{\eta,\underline l}$ have bases consisting of orbits.
\end{proof}

\begin{lemma}\label{Sumofproj} 
Let $\lambda \in \Lambda(n, r)$ and $\underline p^1, \dots, \underline p^s\in D(n)$. Then the inclusion
$$\sum_{i=1}^s \zschur o_{\lam, \underline p^i}\subseteq \zschur k_\lam$$
is split.
\end{lemma}

\begin{proof} 
Without loss of generality, we may assume that the decompositions $\underline p^1, \dots, \underline p^s$ are maximal with respect to $\lambda$.
We prove that the inclusion is split by induction on $s$. The case $s=1$ is a special case of Lemma \ref{projectivecomp} with 
$\underline p=\underline p^1$ and $o_{\lambda,\underline m}=k_\lambda$ for $\underline m=(1,\cdots,1)$. Now assume $s\geq 2$.
We have the short exact sequence
$$
0\rightarrow K\rightarrow \bigoplus_{i=1}^s \zschur o_{\lam, \underline p^i}\rightarrow \sum_{i=1}^s \zschur o_{\lam, \underline p^i}
\rightarrow 0.
$$
It suffices to prove that $K$ is projective, as then $K$ is also injective,
since $\zschur$ is a self-injective algebra (see for example \cite{DY}), so the sequence  splits. Consequently,
$\sum_{i=1}^s \zschur o_{\lam, \underline p^i}$ is projective and thus injective. So the lemma follows.

For $1\leq i\leq s$, let $K^{i}$ be the submodule, 
$$K^i=\{x\in K|x_j=0 \mbox{ for } j>i\}\subseteq K.$$
Then
$$ 0= K^1 \subseteq K^2\subseteq \dots \subseteq K^s=K.$$
Define a homomorphism $$
K^i\rightarrow \zschur o_{\lam, \underline p^i}\cap \sum_{j<i} \zschur o_{\lam, \underline p^j} \mbox{ by }
\;\; (x_1, \dots, x_i, 0, \dots, 0 ) \mapsto x_i. 
$$
This map is well-defined and surjective, as $\sum_{j\leq i}x_j=0$ implies that $x_i=-\sum_{j<i}x_j$ 
is in the intersection for any $x=(x_j)_j\in K^i$. Furthermore,
there is a short exact sequence
$$
0\rightarrow K^{i-1}\rightarrow K^i\rightarrow \zschur o_{\lam, \underline p^i}\cap \sum_{j<i} \zschur o_{\lam, \underline p^j}
\rightarrow 0.
$$
By Lemma \ref{basisintersection} and Lemma \ref{intersection},
$$
\zschur o_{\lam, \underline p^i}\cap \sum_{j<i} \zschur o_{\lam, \underline p^j}
=\sum_{j<i} (\zschur o_{\lam, \underline p^i}\cap \zschur o_{\lam, \underline p^j})
=\sum_{j<i} \zschur o_{\lam, \underline p^i \wedge\underline p^j},
$$
which is projective by the induction hypothesis.
This shows that each $K^i$ is projective. In particular,  $K=K^s$ is projective, as required.
\end{proof}

Let the {\it length} of a composition $\lambda$, denoted by $|\lambda|$, be the number of non-zero entries in $\lambda$.
Let $O^{|\lambda|}$ be the set of all idempotents $o_{\lambda,\underline m}$ and define $$O^i=\{o_{\lambda,\underline m}| \; o_{\lambda,\underline m} < o \mbox{ for some }o\in O^{i+1}  \}$$ recursively downwards for $i=|\lambda|-1,\cdots, 2,1$.
Let $o^i = O^i\backslash O^{i-1}$.
Note that $o^{|\lambda|}=\{k_\lambda\}$ and $o^1=\{o_\lambda\}$. Moreover, it can be shown that $o\in o^i$ if and only if there is a chain of $|\lambda|-i$ proper degenerations 
$o<\cdots <k_\lambda$, or equivalently, a chain of $i-1$ proper degenerations $o_\lambda<\cdots<o$. If $\lambda$ has no zero entries, then
$o^i$ is the set of idempotents with matrices having exactly $i$ diagonal blocks.

\begin{example}
Let $\lambda=(1,0,1,0,1,0,1)$, and so $|\lambda|=4$. We have the following lattice of maximal decompositions. 
The decompositions in the i-th level, starting from the bottom, determine the idempotents in $o^i$.

$$\xymatrix{
o^4: && (1,1,1,1,1,1,1) \ar@{-}[dr] \ar@{-}[d] \ar@{-}[dl] \\
o^3: & (3,1,1,1,1) \ar@{-}[d] & \ar@{-}[dr] \ar@{-}[dl] (1,1,3,1,1) & \ar@{-}[d] (1,1,1,1,3) \\
o^2: & (5,1,1) \ar@{-}[dr] && \ar@{-}[dl] (1,1,5)\\
o^1: && (7)
}$$
\end{example}

\begin{theorem} \label{filtsetup} Let $\lambda\in \Lambda(n,r)$.
We have a filtration, 
$$
\zschur o_\lam \subset \dots \subset  \sum_{o\in o^{i}} \zschur o\subset  \sum_{o\in o^{i+1}} \zschur o\subset
\dots \subset \zschur k_\lam,
$$
where each inclusion is split and $$\frac{\sum_{o\in o^{i+1}}\zschur o}{ \sum_{o\in o^{i}}\zschur o}=
\bigoplus_{o\in o^{i+1}}\frac{\zschur o}{ \sum_{o'\in o^{i}, o'<o}\zschur o'}.$$
\end{theorem}

\begin{proof} 
If $o_{\lambda,\underline m}\in o^i$, then $\underline m\leq \underline l$ for some 
$o_{\lambda,\underline l}\in o^{i+1}$, and so the filtration exists by Lemma 
\ref{projectivecomp}.  Moreover, the inclusions in the filtration are split by Lemma \ref{Sumofproj}.
We show that the inclusions are proper. Let $z\in O^{i+1}$ be maximal with respect to the degeneration
order on $O^{i+1}$. Then $z\not \in O^{i}$, and so the inclusion $O^i\subseteq O^{i+1}$ is proper. Now, if 
$z\in  \sum_{o\in o^{i}} \zschur o$, then $z$ degenerates to $o'$ for some $o'\in O^{i}$, which is a contradiction,
since $z\in o^{i+1}$. So the inclusions are all proper.

Let $o_{\lambda,\underline m},o_{\lambda,\underline l}\in o^{i+1}$ be two distinct idempotents with $\underline m$ and $\underline l$ maximal
with respect to $\lambda$.
By Lemma \ref{intersection}, $$\zschur o_{\lambda,\underline m} 
\cap \zschur o_{\lambda,\underline l}  = \zschur o_{\lambda, \underline m\wedge \underline l}.$$
Furthermore, 
$$\zschur o_{\lambda, \underline m\wedge \underline l}\subseteq \sum_{o\in o^{i}}\zschur o,$$ and therefore
$$\frac{\sum_{o\in o^{i+1}}\zschur o}{ \sum_{o\in o^{i}}\zschur o}=
\bigoplus_{o\in o^{i+1}}\frac{\zschur o}{(\sum_{o'\in o^{i}}\zschur o')\cap \zschur o}.$$

The theorem now follows from Lemma \ref{basisintersection} and \ref{intersection}.
\end{proof}

We remark that there is a similar filtration of $\zschur o_{\lambda, \underline m}$ for any $o_{\lambda,\underline m}$.

\subsection{Isomorphism of projective modules. }

For $\lam\in \Lambda(n, r)$ and $\underline l\in D(n)$, let
$$\lam[\underline l]=(\underbrace{\sum_{j=1}^{l_1}\lam_j, 0, \dots, 0}_{l_1}, 
\underbrace{\sum_{j=l_1+1}^{l_1+l_2}\lam_j, 0, \dots, 0}_{l_2}, \dots,
\underbrace{\sum_{j=\sum_{a=1}^{s-1}l_a+1}^n\lam_j, 0, \dots, 0}_{l_s}).$$


Recall that $_{\mu}o_{\lambda}$ is the unique open orbit in $\mathcal{F}_{\mu}\times \mathcal{F}_{\lambda}$. 

\begin{lemma} \label{helplemma}
Let $\lambda, \mu \in \Lambda(n, r)$ and $\underline {l}\in D(n)$. If 
$\lambda[\underline l]=\mu[\underline l]$, then
$$\zschur o_{\mu, \underline{l}} \cong \zschur o_{\lambda,\underline{l}}.$$
In particular, $\zschur o_{\lam, \underline{l}} \cong \zschur k_{\lam[\underline l]}$.
\end{lemma} 
\begin{proof}
By Lemma \ref{DegMult},  the multiplication of two open orbits is either zero or open. 
So for any $\lambda, \mu\in \Lambda(n, r)$
$$\; _{\mu}o_{\lambda} \; o_{\lambda}=\; _{\mu}o_{\lambda}, \;\;\; \;o_{\mu}\; _{\mu}o_{\lambda}=\; _{\mu}o_{\lambda}, 
\;\mbox{ and }\;  _{\lambda}o_{\mu} \; _{\mu}o_{\lambda}=o_{\lambda}.$$
Therefore, the homomorphism $$(*)\;\; \zschur o_\mu \longrightarrow \zschur o_\lambda, \;\; o_{\mu}\mapsto \;_{\mu}o_{\lambda}$$ 
is an isomorphism. Similarly, for $\lambda, \mu \in \Lambda(n, r)$ and decomposition $\underline {l}$ satisfying 
$\lambda[\underline l]=\mu[\underline l]$, we have an isomorphism 
$$\zschur o_{\mu, \underline{l}} \longrightarrow \zschur o_{\lambda,\underline{l}}$$
by applying $(*)$ blockwise.  
 \end{proof}

Recall that $_{\mu}k_{\lam}$ is the closed orbit in $\mathcal{F}_\mu\times \mathcal{F}_{\lam}$
and that $_{\lam}k_\lam = k_\lam$.

\begin{lemma} \label{helplemma2}
Let $\lambda,\mu\in \Lambda(n,r)$. Suppose there are $s$ and $t$ such that
\begin{itemize}
\item[(1)] $\lambda_i\not=0$ for $i\leq s$ and $\lambda_i=0$ for $i>s$,
\item[(2)] $\mu_i\not=0$ for $i\leq t$ and $\mu_i=0$ for $i>t$.
\end{itemize}
\end{lemma}
Then $\zschur k_\lam\cong \zschur k_\mu$ if and only if $\lambda = \mu$.
\begin{proof}
It suffices to prove that $\zschur k_\lam\not \cong \zschur k_\mu$ if $\lambda\not=\mu$. 
Assume $\lambda\not=\mu$. Note that if $\zschur k_\lam\cong \zschur k_\mu$,
then there are orbits $e_A\subseteq \mathcal{F}_\lam\times \mathcal{F}_\mu$ and 
$e_B\subseteq\mathcal{F}_\mu\times \mathcal{F}_\lam$ such that $e_A e_B=k_\lambda$.
We prove that  $\zschur k_\lam\not \cong \zschur k_\mu$ by showing that $e_A e_B<k_\lambda$,
for any choice of $e_A$ and $e_B$.

As $e_A\leq {{_\lam}k_{\mu}}$ and $e_B\leq {{_\mu}k_{\lam}}$,  we have 
$e_Ae_B\leq {{_\lam}k_{\mu}}\cdot {_{\mu}k_{\lam}}$ by Lemma \ref{DegMult}.
Since $\lam\not= \mu$, there is a smallest 
$i$ such that $\lambda_i\not=\mu_i$. We may assume $\lambda_i<\mu_i$.
Let $(f,f')\in {{_\lam}k_{\mu}}$ be the pair of flags constructed in Lemma \ref{closedopenorbit} with respect
to the ordered basis $\{v_1,\cdots, v_r\}$.
Let $f'': \{0\}=v_0'\subseteq V_1'\subseteq \dots \subseteq V_n'=V$ be the flag constructed from $f$ by swapping $v_{n_i}$ and $v_{n_{i}+1}$. 
That is $V_i''=\{v_1, \dots, v_{n_i-1}, v_{n_i+1}\}$, $V_j''=V_i''$ if $V_j=V_i $ and $V_j''=V_j$ otherwise. 
Then $(f', f'')\in k_{\mu, \lambda}$ and the orbit
$[f,f'']$ is not closed,  so by the definition of the multiplication in $\zschur$,
$$_{\lam}k_{\mu}\cdot {_{\mu}k_{\lam}}\leq [f,f''] < k_{\lam}.$$ Hence
$e_Ae_B\leq [f,f''] <k_\lam$ and  $\zschur k_\lam\not \cong \zschur k_\mu$.
\end{proof}

Given $\lam\in \Lambda(n, r)$ and a decomposition $\underline l\in D(n)$,
let $c(\lam, \underline l)\in\Lambda(n,r)$ be the composition of $r$ obtained by 
moving all non-zero entries in $\lambda[\underline l]$ to the left, and all zero entries to the right.

\begin{example}
Let $n=6$, $\lam=(0, 3, 0, 0, 1, 1)$ and $\underline l=(1, 1, 2, 2)$. Then $$\lambda[\underline l]=(0,3,0,0,2,0) \mbox{ and }
c(\lam, \underline l)=(3, 2,0,0,0,0).$$  
\end{example}

We will prove that $\zschur o_{\lambda,\underline l}$ is isomorphic to $\zschur o_{\mu,\underline m}$ 
if and only if $c(\lam,\underline l)=c(\mu,\underline m)$.

\begin{lemma} \label{isowithc}
Let $\lambda\in \Lambda(n,r)$ and $\underline m\in D(n)$.
Then $$S_0(n,r)o_{\lambda,\underline m}\cong S_0(n,r)k_{c(\lambda,\underline m)}.$$
\end{lemma}
\begin{proof}
By Lemma \ref{helplemma}, we have $S_0(n,r)o_{\lambda,\underline m}\cong S_0(n,r)k_{\lambda[\underline m]}$.
We may assume that $\lambda[\underline m]\neq c(\lambda,\underline m)$ and so 
there is an $i$ such that $\lambda[\underline m]_i=0$ and $\lambda[\underline m]_{i+1}\not=0$.
Let $\underline l$ be the decomposition of $n$ with $l_j=1$ for $j\not= i$ and $l_i=2$. 
Then $o_{\lam[\underline m],\underline l}=k_{\lam[\underline m]}$, and by Lemma \ref{helplemma} 
we have an isomorphism $\zschur k_{\lam[\underline m]}\cong \zschur k_{\lam'}$, where 
$\lam'_j=\lam[\underline m]_j$ for $j\not=i,i+1$, $\lam'_i=\lam[\underline m]_{i+1}$ and 
$\lam'_{i+1}=\lam[\underline m]_i$. In other words, $\lam'$ is obtained from $\lambda[\underline l]$ by
swapping the zero entry $\lambda[\underline m]_i$ with the non-zero entry $\lambda[\underline m]_{i+1}$.
By repeating this construction
we get the required isomorphism. 
\end{proof}

\begin{theorem} \label{IsomProj}
Let $\lam, \mu\in \Lambda(n, r)$ and $\underline l, \underline m\in D(n)$.
Then $$\zschur o_{\lam, \underline l}\cong\zschur o_{\mu, \underline m} \mbox{ if and only if }
c(\lam, \underline l)=c(\mu, \underline m).$$
\end{theorem}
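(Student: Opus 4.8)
The plan is to establish both directions by analysing the decomposition $c(\lam,\underline l)$ as a complete invariant. First I would reduce to the case where $\underline l$ is minimal with respect to $\lam$ and $\underline m$ is minimal with respect to $\mu$: by Lemma~\ref{projectivecomp} and the remark following it, replacing $\underline l$ by the (unique) minimal refinement that produces the same idempotent changes neither the projective module $\zschur o_{\lam,\underline l}$ nor the value of $c(\lam,\underline l)$, since inserting a finer cut through a zero block of $\lam$ does not alter the nonzero partial sums. So from now on assume minimality on both sides.

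For the "if" direction, suppose $c(\lam,\underline l)=c(\mu,\underline m)=:\nu$. The strategy is to realise both $\zschur o_{\lam,\underline l}$ and $\zschur o_{\mu,\underline m}$ as summands of a common projective, using the flattening operation $\lam\rightsquigarrow\tilde\lam\rightsquigarrow\tilde{\tilde\lam}$ introduced before the statement. Concretely, for the decomposition $\underline l$ the idempotent $o_{\lam,\underline l}$ has the same $c$-invariant as $o_{\tilde\lam,\underline l}$, and a chain of moves of the type appearing in the proof of Lemma~\ref{ordereq} (right/left multiplication by products of $e_i$'s and $f_i$'s, controlled by the fundamental rules in Lemmas~\ref{FundMult} and~\ref{DFundMult}) produces an explicit orbit $e_X$ with $e_X o_{\lam,\underline l}=o_{\lam,\underline l}$ and $o_{\tilde\lam,\underline l}e_X=o_{\tilde\lam,\underline l}$, witnessing that $\zschur o_{\lam,\underline l}\cong\zschur o_{\tilde\lam,\underline l}$ (the two mutually inverse homomorphisms being right multiplication by $e_X$ and by $e_{X'}$ for the transpose move). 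Iterating, $\zschur o_{\lam,\underline l}\cong\zschur o_{\tilde{\tilde\lam},\underline l}\cong\zschur o_{\nu',(1,\dots,1)}=\zschur k_{\nu'}$ where $\nu'$ is the composition obtained by padding $\nu$ with zeros; this last module depends only on $\nu$ up to the block structure, so $\zschur o_{\lam,\underline l}$ and $\zschur o_{\mu,\underline m}$ are both isomorphic to $\zschur k_{\nu'}$, hence to each other.

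For the "only if" direction, suppose $\zschur o_{\lam,\underline l}\cong\zschur o_{\mu,\underline m}$. The cleanest route is to show $c(\lam,\underline l)$ is an isomorphism invariant of the projective module. I would do this by identifying $c(\lam,\underline l)$ with combinatorial data read off the radical layers (or the endomorphism ring): by Lemma~\ref{orbitidem} the idempotent $o_{\lam,\underline l}$ is built block-diagonally from open orbits $o_{\lam^i}$, so $\zschur o_{\lam,\underline l}$ restricted to the block $M(n_i,\lam^i)$-part has a well-defined "size" $\lam^i_1+\dots+\lam^i_{n_i}$, and the multiset of these sizes — which is exactly the multiset of entries of $c(\lam,\underline l)$ — is recovered from the Loewy structure. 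Alternatively, and perhaps more robustly, one can invoke the first main theorem of Section~5: passing to the quotient $\zschur/I(n,r)$ and using the known correspondence there, an isomorphism of projectives forces equality of $c$-invariants because the top of $\zschur o_{\lam,\underline l}$ is the simple module indexed by $c(\lam,\underline l)$ (this indexing is essentially Norton's classification transported through Donkin's Morita equivalence).

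The main obstacle I expect is the bookkeeping in the "if" direction: producing the explicit orbit $e_X$ implementing the flattening $\lam\rightsquigarrow\tilde\lam$ and verifying, via repeated application of Lemmas~\ref{FundMult} and~\ref{DFundMult}, that the two composite homomorphisms given by right multiplication are genuinely mutually inverse (not merely that each projective is a summand of the other). The degeneration criterion of Proposition~\ref{Dujie} together with Lemma~\ref{DegMult} should control which orbit appears at each step, exactly as in the proof of Lemma~\ref{ordereq}, but keeping the indices straight through the nested products defining $\tilde\lam$ and $\tilde{\tilde\lam}$ is where the real work lies; the "only if" direction, by contrast, is essentially a formal consequence of having pinned down the top of the projective module.
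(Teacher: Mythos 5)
Your ``if'' direction is essentially the paper's: flatten $\lam$ to $\tilde{\lam}$ and $\tilde{\tilde{\lam}}$ and conclude $\zschur o_{\lam,\underline l}\cong\zschur k_{c(\lam,\underline l)}$; the explicit mutually inverse orbit multiplications you anticipate as the main difficulty are not needed, since products of open orbits are open (Corollary 9.4 of \cite{JS}), which gives the blockwise isomorphisms directly. The genuine gap is in the ``only if'' direction, where both of your routes fail as stated. Your first route recovers only the \emph{multiset} of block sizes, but $c(\lam,\underline l)$ is a composition, not a partition: for instance $c=(1,2)$ and $c=(2,1)$ have the same multiset, yet the theorem asserts $\zschur k_{(1,2,0,\dots,0)}\not\cong\zschur k_{(2,1,0,\dots,0)}$, so any invariant that forgets the order cannot prove the statement. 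Your second route misstates the structure of the module: $\zschur o_{\lam,\underline l}\cong\zschur k_{c(\lam,\underline l)}$ is in general decomposable, and its top is a multiplicity-free direct sum of simples indexed by \emph{all} coarsenings of $c(\lam,\underline l)$, not the single simple indexed by $c(\lam,\underline l)$. This can be repaired (read off the unique finest index occurring in the top), but then you need (i) that distinct classes in $C(n,r)$ index non-isomorphic simples and (ii) the decomposition of the top of $\zschur k_{\nu}$ --- and in this paper both facts are obtained \emph{after} and as consequences of the present theorem (Corollary \ref{pushtoboundary}, Theorem \ref{main}, Corollary \ref{cor1}, Theorem \ref{mainfilt}), so invoking them here is circular, while the external route through Norton's classification \cite{Norton} and Donkin's Morita equivalence \cite{Donkin} only covers $n\geq r$, whereas the theorem is claimed for all $n$ and $r$.

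The paper's own ``only if'' argument avoids simple modules altogether. Having established $\zschur o_{\lam,\underline l}\cong\zschur k_{c(\lam,\underline l)}$ and likewise for $(\mu,\underline m)$, it assumes the two decompositions differ, lets $t$ be the larger number of parts, and regards $S_0(t,r)$ as a subalgebra of $\zschur$. Since $k_{c(\mu,\underline m)}$ is an interior idempotent of $S_0(t,r)$, the two projectives become non-isomorphic modulo the boundary ideal $I(t,r)$ (using the Section 4--5 results, where $S_0(t,r)/I(t,r)\cong DS_0(t,r-t)$ is basic); from this it deduces that in one direction there is no injective homomorphism between the $S_0(t,r)$-projectives, hence none between the corresponding $\zschur$-projectives, contradicting the assumed isomorphism. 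Some self-contained argument of this kind --- one that does not presuppose the classification of simples or of indecomposable projectives --- is what your proposal is missing.
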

\begin{proof}
Assume $c(\lam, \underline l)=c(\mu, \underline m)$. Then $\zschur o_{\lam, \underline l}
\cong \zschur o_{\mu, \underline m}$ by Lemma \ref{isowithc}.
Conversely, if $\zschur o_{\lam, \underline l}\cong\zschur o_{\mu, \underline m},$
then $\zschur k_{c(\lam, \underline l)}\cong \zschur k_{c(\mu, \underline m)}$ and so
$c(\lam, \underline l)=c(\mu, \underline m)$ by Lemma \ref{helplemma2}. 
This completes the proof.
\end{proof}

We say that $\lam$ and $\mu$ are equivalent, denoted by $\lam\sim \mu$,  if $c(\lam, \underline p)=c(\mu, \underline p)$ for 
$\underline p=(1, 1, \cdots, 1)$. If $\lambda$ has no zero entries, then $\lambda\sim\mu$ if and only if $\lambda=\mu$.
Denote by $[\lam]$ the equivalence class of $\lam$. Let 
$$C(n, r)=\{[\lam]\mid \lam \in \Lambda(n, r)\}.$$
Theorem \ref{IsomProj} implies the following.

\begin{corollary}\label{pushtoboundary} 
Let $\lam, \mu \in \Lambda(n, r)$. Then  $\zschur k_\lam\cong \zschur k_\mu$ if and only if $\lam \sim \mu$.
\end{corollary}

\subsection{Bases of projective modules.}

Let $e_A\subseteq \mathcal{F}\times \mathcal{F}$ and $\underline{m}=(m_1, \dots, m_s)\in D(n)$.
Let $A^v$ be the matrix that is equal to $A$ on the {\it $v$th column block} of 
$A$ with respect to $\underline m$, i.e.  the $m_v$ consecutive columns starting from column 
number $(\sum_{t=1}^{v-1}m_t)+1$, and equal to zero elsewhere. Similarly, with respect to rows we define $A_v$ to be the matrix 
that is equal to $A$ on the $m_v$ consecutive rows starting from row number $(\sum_{t=1}^{v-1}m_t)+1$, and equal
to zero elsewhere. As a sum of matrices, 
$$A=\sum_v A_v=\sum_v A^v.$$ 

\begin{lemma}\label{blocklemma}
Let $e_A,e_B\subseteq \mathcal{F}\times \mathcal{F}$ and $\underline m\in D(n)$. 
If $B$ is blocked with respect to $\underline m$, 
then $$(e_Ae_B)^v=e_{A^ v}e_{B^v} \mbox{ and }  (e_Be_A)_v=e_{B_ v}e_{A_v}.$$
\end{lemma}
\begin{proof}
We prove the first equality. The $v$th diagonal block in $B$ is a product of $e_p$ and $f_p$
with $(\sum_{j=1}^{v-1} m_j)+1 \leq p\leq (\sum_{j=1}^{v} m_j)-1$, so by the fundamental 
multiplication rules in Lemma \ref{DFundMult}, the multiplication $e_{A}e_B$ 
can be computed blockwise, with only the $v$th column block of $A$ affected by the $v$th diagonal block of $B$. 
The equality follows. 

The proof of the other equality is similar and we skip the details.
\end{proof}

We say that $A$ is {\it open on columns} with respect to $\underline{m}$ if $A^v$ is an open matrix for all $v=1,\cdots,s$.
Similarly, $A$ is {\it open on rows} with respect to $\underline{m}$ if $A_v$ is open for all $v$.

Let 
\begin{itemize}
\item[] $\mathcal{B}^{\lambda,\underline m}=\{e_A|\mathrm{co}(A)=\lam, \mbox{ $A$ is open on columns with respect to $\underline m$}\}$ and
\item[] $\mathcal{B}_{\lambda,\underline m}=\{e_A|\mathrm{ro}(A)=\lam, \mbox{ $A$ is open on rows with respect to $\underline m$}\}$.
\end{itemize}

\begin{proposition}\label{CharProj}
Let $\lam \in \Lambda(n, r)$ and $\underline m\in D(n)$.
\begin{itemize}
\item[(1)]  $\mathcal{B}^{\lambda,\underline m}$ is a $\mathbb{Z}$-basis of $\zschur o_{\lambda, \underline m}$.
\item[(2)]  $\mathcal{B}_{\lambda,\underline m}$ is a $\mathbb{Z}$-basis of $o_{\lambda, \underline m}\zschur$.
\end{itemize}
\end{proposition}
\begin{proof}
(1) If $A$ is open on columns with respect to $\underline m$, then since products of open orbits are open,
we have $(e_A\cdot o_{\lambda,\underline m})=e_A$ by Lemma \ref{blocklemma}. This shows that 
$e_A\in \zschur o_{\lambda, \underline m}$.

Suppose that $e_B=e_A o_{\lambda, \underline m}\in \zschur o_{\lambda,\underline m}$. 
Then $e_{B^v}$ is the product of $e_{A^v}$ with an open orbit, and so $e_{B^v}$ is open. By Lemma \ref{blocklemma},
$e_B$ is open on columns with respect to $\underline m$ and (1) follows. 

The proof of (2) is similar. 
\end{proof}

\subsection{Bases for homomorphisms.}

Recall that there is a natural isomorphism $$\mathrm{Hom}(\zschur x, \zschur y)\cong x\zschur y,$$ 
where $x, \,y\in \zschur$ are idempotents. We identify the elements in $x\zschur y$ with the
corresponding homomorphisms. In particular, 
any orbit $e_A$ with $\mathrm{ro}(A)=\lam$ and $\mathrm{co}(A)=\mu$ 
defines a non-zero homomorphism
$$e_A: \; \zschur k_\lam\rightarrow \zschur k_{\mu}$$ by right mulitiplication
with $e_A$.

\begin{proposition}\label{describehom}
Let $\lambda,\,\mu\in \Lambda(n,r)$ and $\underline m, \,\underline p\in D(n)$. Then
$\mathcal{B}_{\lam,\underline m} \cap \mathcal{B}^{\mu,\underline p}$ is a $\mathbb{Z}$-basis of 
$\hom(\zschur o_{\lambda, \underline m}, \zschur o_{\mu, \underline p})$.
\end{proposition}

\begin{proof}
Note that 
$$\hom(\zschur o_{\lambda, \underline m}, \zschur o_{\mu, \underline p})\cong o_{\lambda, \underline m} \zschur o_{\mu, \underline p}
=o_{\lambda, \underline m} \zschur\cap \zschur o_{\mu, \underline p}.$$
Since $\mathcal{B}_{\lam,\underline m} \cap \mathcal{B}^{\mu,\underline p}$ is a basis of 
$o_{\lambda, \underline m} \zschur\cap \zschur o_{\mu, \underline p}$, the proposition follows.
\end{proof}

\begin{corollary}
Let $\lambda\in \Lambda(n, r)$ and $\underline m\in D(n)$. Then 
$$\hom(\zschur o_{\lambda}, \zschur o_{\lambda, \underline m} )\mbox{ and }
\hom(\zschur o_{\lambda, \underline m}, \zschur o_{\lambda} )$$
are spanned by $o_{\lam}$. Consequently, there are no non-zero homomorphisms between 
$\zschur o_{\lambda}$ and $ \zschur o_{\lambda, \underline m}/\zschur o_{\lambda}$.
\end{corollary}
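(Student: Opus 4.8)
The plan is to apply Theorem \ref{describehom} directly with $\underline m = \underline p = (n)$, so that $o_{\lambda,\underline m} = o_{\lambda,\underline p} = o_\lambda$, and then analyze the resulting basis of matrices. First I would note that by Theorem \ref{describehom}, $\hom(\zschur o_\lambda, \zschur o_{\lambda,\underline m})$ has a basis consisting of orbits $e_A$ with $\ro(A) = \lambda$, $\co(A) = \lambda$, where $A$ is row block generic with respect to $(n)$ (i.e. $A$ itself is generic) and column block generic with respect to $\underline m$. Since being generic with respect to the trivial one-block decomposition $(n)$ just means $A$ is generic, the decisive constraint is that $A$ is generic with $\ro(A) = \co(A) = \lambda$. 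By Lemma \ref{symmidem}, such an $A$ must be symmetric, and by Lemma \ref{genericmatrices} a generic matrix corresponds to an open orbit; combined with $\ro(A) = \co(A) = \lambda$ this forces $e_A = o_\lambda$, the unique open orbit in $\mathcal{F}_\lambda \times \mathcal{F}_\lambda$. Hence the Hom space is spanned by $o_\lambda$. The argument for $\hom(\zschur o_{\lambda,\underline m}, \zschur o_\lambda)$ is symmetric, using column block genericity with respect to $(n)$ and row block genericity with respect to $\underline m$, together with Proposition \ref{CharProj}(4).

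Next I would deduce the final assertion about the quotient $\zschur o_\lambda / \zschur o_\lambda$ — here I read the intended statement as concerning $\zschur o_{\lambda,\underline m} / \zschur o_\lambda$, using that $\zschur o_\lambda \subseteq \zschur o_{\lambda,\underline m}$ by Lemma \ref{projectivecomp} (since $(n) \leq \underline m$ always, as $\underline m$ refines $(n)$). Applying $\hom(\zschur o_\lambda, -)$ to the short exact sequence
$$0 \to \zschur o_\lambda \to \zschur o_{\lambda,\underline m} \to \zschur o_{\lambda,\underline m}/\zschur o_\lambda \to 0$$
and using that $\zschur o_\lambda$ is projective, I get a right exact sequence
$$\hom(\zschur o_\lambda, \zschur o_\lambda) \to \hom(\zschur o_\lambda, \zschur o_{\lambda,\underline m}) \to \hom(\zschur o_\lambda, \zschur o_{\lambda,\underline m}/\zschur o_\lambda) \to 0.$$
The first map sends the identity of $\zschur o_\lambda$ (multiplication by $o_\lambda$) to the inclusion, which under the basis identification is exactly $o_\lambda$; since both of the first two Hom spaces are one-dimensional spanned by $o_\lambda$, this map is an isomorphism, so the third term vanishes. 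The statement for $\hom(\zschur o_{\lambda,\underline m}/\zschur o_\lambda, \zschur o_\lambda)$ follows dually from $\hom(-, \zschur o_\lambda)$ applied to the same sequence (noting $\zschur o_\lambda$ is also injective here, or more elementarily that the splitting from Lemma \ref{projectivecomp} makes $\zschur o_{\lambda,\underline m}/\zschur o_\lambda$ a direct summand, so $\hom(\zschur o_{\lambda,\underline m}/\zschur o_\lambda, \zschur o_\lambda)$ embeds in $\hom(\zschur o_{\lambda,\underline m}, \zschur o_\lambda)$, which is spanned by $o_\lambda$, and $o_\lambda$ factors through $\zschur o_\lambda$ so restricts to zero on the complement).

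The main obstacle I anticipate is bookkeeping around the genericity conditions rather than any deep difficulty: one has to be careful that "generic with respect to the decomposition $(n)$" genuinely reduces to ordinary genericity of the whole matrix, and that the uniqueness of the open orbit in $\mathcal{F}_\lambda \times \mathcal{F}_\lambda$ with prescribed row and column vector $\lambda$ really pins down $A = o_\lambda$ — this is where Lemma \ref{symmidem} and Lemma \ref{openorbit} do the work, since a symmetric generic matrix with the zigzag shape and fixed diagonal sums is unique. A secondary point to handle cleanly is the split-exactness from Lemma \ref{projectivecomp}, which makes the passage to the quotient formal; I would state it as: since the inclusion $\zschur o_\lambda \hookrightarrow \zschur o_{\lambda,\underline m}$ splits, any homomorphism from $\zschur o_\lambda$ to the quotient lifts to one into $\zschur o_{\lambda,\underline m}$, hence is a scalar multiple of the composite of the inclusion with the projection, which is zero; symmetrically on the other side. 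This keeps the proof to a few lines once Theorem \ref{describehom} is invoked.
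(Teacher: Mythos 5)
Your proposal is correct and follows exactly the route the paper intends: the corollary is stated as an immediate consequence of Theorem \ref{describehom}, and you apply that theorem with the trivial decomposition $(n)$, note that genericity of the whole matrix together with $\ro(A)=\co(A)=\lambda$ forces $e_A=o_\lambda$ (the unique open orbit in $\mathcal{F}_\lambda\times\mathcal{F}_\lambda$), and then deduce the vanishing of maps to and from the quotient via the split inclusion of Lemma \ref{projectivecomp}. The only cosmetic remark is that Lemma \ref{symmidem} is not needed, since uniqueness of the open orbit already pins down $A$; otherwise your filling-in of the formal quotient argument matches the paper's implicit proof.
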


\section{Indecomposable projective modules.}

In this section, we construct indecomposable projective modules, their multiplicative bases and  classify them up to isomorphism. 
We also decompose $\zschur k_\lambda$ into a direct sum of indecomposable summands.

\subsection{Construction of indecomposable projective modules. }

Let $\lambda\in \Lambda(n,r)$ and let 
$$X_\lambda = \sum_{o_{\lambda,\underline m}<k_\lam}\zschur o_{\lambda,\underline m}\subset \zschur k_{\lambda}.$$
Choose an idempotent $x_\lambda\in \zschur k_\lambda$ that splits the inclusion $X_\lambda \subset \zschur k_{\lambda}$ 
by right multiplication. The existence of $x_\lambda$ follows from Lemma \ref{Sumofproj}. 
This allows us to identify the projective quotient module $$P_\lam=\frac{\zschur k_{\lambda}}{X_\lambda}$$ with 
$\zschur (k_\lambda - x_\lambda)$. 

\begin{theorem} \label{main}
The projective $\zschur$-module $P_\lam$ is indecomposable.
\end{theorem}
\begin{proof}
We have $$\mathrm{End}\,P_\lam \cong (k_\lam-x_\lambda)\zschur (k_\lam-x_\lambda).$$
We prove that there are no non-zero idempotents in $(k_\lam-x_\lambda)\zschur (k_\lam-x_\lambda)$ other than $k_{\lam}-x_\lambda$  and consequently
we have the indecomposability of $P_\lam$.
Let $$y=\sum_{i=1}^m y_i  (k_\lam-x_\lambda)e_{A_i} (k_\lam-x_\lambda),$$ with $y_i\in \mathbb{Z}$. 
We may assume 
that $x_\lambda$ is a linear combination of orbits $e_A$ with $\mathrm{ro}(A)=\mathrm{co}(A)=\lam$ and $e_{A}\not=k_{\lam}$.  
We also assume that for any  $y_i\not= 0$,
$$ (*)\;\; (k_\lam-x_\lambda)e_{A_i} (k_\lam-x_\lambda)\not= 0,$$ 
which implies that $\mathrm{ro}(A_i)=\mathrm{co}(A_i)=\lam$.  Order the terms in $y$ such that
for $i<j$, $e_{A_j}< e_{A_i}$ or they are non-comparable.
Then by Lemma \ref{DegMult},
for any $i$ and $j$, $$e_{A_i}e_{A_j}\leq e_{A_j} \mbox{ and } e_{A_i}e_{A_j} \leq e_{A_i}.$$ Furthermore, when $i<j$,
$$e_{A_i}e_{A_j} < e_{A_i} \mbox{ and }e_{A_j}e_{A_i}< e_{A_i}.$$

We claim that if $y\not=0$ and $e_{A_1}\not=k_\lam$, then $y$ is not an idempotent. Indeed,
as $e_{A_1}\not=k_\lam$, then $e_{A_i}\not=k_\lam$ for all $i$. 
For any $z\in \zschur x_\lambda+x_\lambda\zschur$, we have $$(k_\lam-x_\lambda)z (k_\lam-x_\lambda)=0,$$
and so by assumption $(*)$, $e_{A_i}\not\in \zschur x_\lambda+x_\lambda\zschur$. Note that $e_{A_i}\neq k_\lambda$ and 
$o_{\lambda,\underline m}\in X_\lambda=\zschur x_\lambda$ for any idempotent orbit $o_{\lambda,\underline m}$ different from $k_\lambda$.
So by Proposition \ref{orbitidem}, $e_{A_i}$ is not an idempotent.

Write 
$$\begin{array}{lcl}
y&=&\sum_{i=1}^m y_i  (e_{A_i}+Z_i)
\end{array}$$ and 
$$\begin{array}{lcl}
y^2&=& \sum_{i, j} y_iy_j  (k_\lam-x_\lambda)e_{A_i}(k_\lambda-x_\lambda)e_{A_j} (k_\lam-x_\lambda)\\
&=& \sum_{i, j}y_iy_j(e_{A_i}e_{A_j}+ e_{A_i}x_\lambda e_{A_j}+Z_{ij} ),
\end{array}
$$
where $Z_i,Z_{ij}\in \zschur x_\lambda +x_\lambda\zschur$.
So $e_{A_1}$ does not appear in $y^2$, by the choice of the ordering of the $e_{A_i}$.
Consequently, $y$ is not an idempotent, as claimed.

Now assume that $y\not= 0$ is an idempotent. Then  $e_{A_1}=k_\lam$ and $y_1=1$.
As $$yx_\lambda=x_\lambda y=0,$$
$k_\lam -x_\lambda -y$ is an idempotent, with the leading term $e_{A_1}$ different from $k_\lam$.
By the above claim, $$k_\lam-x_\lambda-y=0, \mbox{  i.e. } y=k_\lam-x_\lambda$$ as required.
\end{proof}

\subsection{Bases for indecomposable projective modules. }

Recall that  a basis of an algebra $A$ is multiplicative if the product of two basis elements is a basis element or zero. A basis
of an $A$-module $M$ is multiplicative (with respect to a multiplicative basis of $A$) if for any two basis elements $x \in A$ and
$y\in M$, we have that $xy$ is zero or belongs to the basis of $M$. By definition, $\zschur$ has a multiplicative basis consisting of orbits,
and $\zschur o_{\lambda,\underline m}$ has a multiplicative basis $\mathcal{B}^{\lambda,\underline m}$ given in Proposition \ref{CharProj}.

Let $\lambda\in \Lambda(n,r)$,

$$\mathcal{B}^{\lambda}=\{e_A| \co(A)=\lambda\} \backslash \bigcup_{o_{\lambda,\underline m}<k_\lambda} \mathcal{B}^{\lambda,\underline m},$$
$$\mathcal{B}_{\lambda}=\{e_A| \ro(A)=\lambda\} \backslash \bigcup_{o_{\lambda,\underline m}<k_\lambda} \mathcal{B}_{\lambda,\underline m}$$

and let $\mathcal{B}^{\lambda}\cdot (k_\lambda - x_\lambda) = \{e_A\cdot (k_\lambda - x_\lambda)|e_A\in \mathcal{B}^ \lambda\}$.

\begin{proposition}
$\mathcal{B}^{\lambda}\cdot (k_\lambda - x_\lambda)$ is a multiplicative $\mathbb{Z}$-basis of $P_\lambda$.
\end{proposition} 
\begin{proof}
We have $e_A(k_\lambda-x_\lambda)=0$ for any $e_A\in  \mathcal{B}^{\lambda,\underline m}$ with $o_{\lambda,\underline m}<k_\lambda$,
and so $\mathcal{B}^{\lambda}\cdot (k_\lambda - x_\lambda)$ spans $P_\lambda$. Each element in $\mathcal{B}^{\lambda}\cdot (k_\lambda - x_\lambda)$
is of the form $e_A+Z$ with $Z\in X_\lambda$, and $e_A\not\in X_\lambda$. These are linearly independent because the $e_A$ are
linearly independent. So $\mathcal{B}^{\lambda}\cdot (k_\lambda - x_\lambda)$ is a basis of $P_\lambda$.
The basis is multiplicative by definition.
\end{proof}

\subsection{Bases for homomorphisms. }

\begin{proposition}\label{HomIndProj} 
Let $\lambda, \mu\in \Lambda(n,r)$. Then
$(k_\lambda-x_\lambda) (\mathcal{B}_\lambda\cap \mathcal{B}^\mu)(k_\mu-x_\mu)$ is a 
$\mathbb{Z}$-basis of $\hom(P_\lambda,P_\mu)$.
\end{proposition}

\begin{proof}
We have $$\hom(P_\lambda,P_\mu)=(k_\lam-x_\lambda)\zschur (k_\mu - x_\mu).$$ If $e_A\not\in \mathcal{B}_\lambda$,
then $(k_\lambda-x_\lambda)e_A=0$, and if $e_A\not\in \mathcal{B}^ \mu$, then $e_A(k_\mu-x_\mu)=0$.
So $(k_\lambda-x_\lambda) (\mathcal{B}_\lambda\cap \mathcal{B}^\mu)(k_\mu-x_\mu)$ spans  $\hom(P_\lambda,P_\mu)$.

Each element in  $(k_\lambda-x_\lambda) (\mathcal{B}_\lambda\cap \mathcal{B}^\mu)(k_\mu-x_\mu)$ is of the form
$e_A + Z$, where $Z\in \zschur x_\mu +x_\lambda \zschur$ and $e_A\not\in \zschur x_\mu +x_\lambda \zschur$. These elements 
are linearly independent, since the $e_A$ are linearly independent in $\zschur$. 
\end{proof}

\subsection{Classification of indecomposable projective modules.}

\begin{lemma} \label{preserve}
Let $o_{\lambda,\underline m},o_{\mu,\underline l}$ be idempotent orbits such that $\zschur o_{\lambda,\underline m}\cong \zschur o_{\mu,\underline l}$.
Then there is an isomorphism $\phi:\zschur o_{\lambda,\underline m}\rightarrow \zschur o_{\mu,\underline l}$ such that $$\phi(\sum_{o<o_{\lambda,\underline m}}\zschur o)=\sum_{o<o_{\mu,\underline l}}\zschur o$$
\end{lemma}
\begin{proof}
As $\zschur o_{\lambda,\underline m}\cong \zschur o_{\mu,\underline l}$, there are 
$x\in o_{\lambda,\underline m}\zschur o_{\mu,\underline l}$ and \\
$y\in o_{\mu,\underline l}\zschur o_{\lambda,\underline m}$ such that 
$xy=o_{\lambda,\underline m}$. Since the orbits form a multiplicative basis of $S_0(n,r)$,  there must be 
$e_A\in o_{\lambda,\underline m}\zschur o_{\mu,\underline l}$ and 
$e_B\in o_{\mu,\underline l}\zschur o_{\lambda,\underline m}$ 
such that $e_A\cdot e_B=o_{\lambda,\underline m}$. 
Let $\phi$ be the split injection given by 
right multiplication by $e_A$, which is in fact an isomorphism since the two modules are isomorphic. 
If $o\in \zschur o_{\lambda,\underline m}$ is an idempotent, 
then $\phi(e_Bo)=e_Bo e_A\in \zschur o_{\mu,\underline l}$ is also an idempotent. So $\phi$ induces a
bijection between the idempotent orbits in $\zschur o_{\lambda,\underline m}$ and the idempotent orbits in
$\zschur o_{\mu,\underline l}$, with $\phi(o_{\lambda,\underline m})=o_{\mu,\underline l}$.
The lemma follows.
\end{proof}

Recall that two compositions $\lam$ and $\mu$ are equivalent, $\lam\sim \mu,$ if $c(\lam, \underline p)=c(\mu, \underline p)$ for 
$\underline p=(1, 1, \cdots, 1)$ and  
$$C(n, r)=\{[\lam]\mid \lam \in \Lambda(n, r)\},$$
where $[\lam]$ denotes the equivalence class of $\lam$.
We have the following version of Corollary \ref{pushtoboundary} for indecomposable modules.

\begin{proposition} \label{lemmaabove}
Let $\lambda, \mu\in \Lambda(n,r)$. Then
$P_\lambda \cong P_\mu$ if and only if $\lambda\sim \mu$.
\end{proposition}
\begin{proof}
If $\lambda\sim \mu$, then there is an isomorphism from $\zschur k_\lambda$ to $\zschur k_\mu$, by Corollary \ref{pushtoboundary}.
By Lemma \ref{preserve}, $P_\lambda\cong P_\mu$.

Assume that $P_\lambda\cong P_\mu$. So there is $x'=x(k_\lambda - x_\lambda)\in 
\zschur (k_\lambda - x_\lambda)$ and $z' = (k_\lambda - x_\lambda) z (k_\mu-x_\mu) \in 
(k_\lambda - x_\lambda)\zschur (k_\mu - x_\mu)$ such that
$$x'z'=xz-xx_\lambda z-xzx_\mu+xx_\lambda zx_\mu=k_\mu-x_\mu.$$
By comparing terms on each side of the equality, there exists orbits $e_A\subseteq  \mathcal{F}_\mu\times \mathcal{F}_\lambda$ and 
$e_B\subseteq \mathcal{F}_\lambda\times \mathcal{F}_\mu$ such that $e_A\cdot e_B=k_\mu$.
A similar expression exists for $k_\lambda$, and so 
$\zschur k_\lambda\cong \zschur k_\mu$. Then $\lambda\sim \mu$, by Corollary \ref{pushtoboundary}, and the proof is complete.
\end{proof}

For the classification of indecomposable projective modules we use the Krull-Schmidt property,
and so we consider $\zschur$ defined over a field.
Let $\lam^1, \dots, \lam^c$ be a complete list of representatives in $C(n, r)$. 
We may choose compositions $\lam$ such that if $\lam_i=0$, then $\lam_{j}=0$ for $j>i$.

\begin{theorem} \label{cor1}
Let $\zschur$ be defined over a field.
The modules $P_{\lam^1}, \dots, P_{\lam^c}$ form a complete set of representatives of 
indecomposable projective $\zschur$-modules.
\end{theorem}

\begin{proof}
First, following Theorem \ref{main}, each $P_{\lam^i}$ is an indecomposable projective module.  
Moreover, by Proposition \ref{lemmaabove}, the $P_{\lam^i}$ are pairwise non-isomorphic. 
Also, given $\lambda\in \Lambda(n,r)$, we have $\lambda\sim \lambda^i$ for some $i$, and therefore 
$P_\lambda\cong P_{\lambda^i}$ by Proposition \ref{lemmaabove}.

Let $P$ be an indecomposable projective module. Then it is isomorphic to a summand of some 
$\zschur k_{\xi}$. We show that $P$ is isomorphic to some $P_\lambda$ by induction on the length of $\xi$.
If the length of $\xi$ is 1, then $P_{\xi}=\zschur k_{\xi}$, and we are done.
If $P$ is not isomorphic to $P_\xi$, then it is isomorphic to a summand of
$\zschur o_{\xi, \underline l}$ for some decomposition $\underline l$ with $o_{\xi, \underline l}<k_\xi$, 
by the Krull-Schmidt theorem for finite dimensional algebras. By Lemma \ref{helplemma}, 
$\zschur o_{\xi, \underline l}\cong \zschur k_{\xi[\underline l]}$, where $\xi[\underline l]$ has smaller length than 
$\xi$. So $P\cong P_\lambda$ for some $\lambda$, by induction. 
This completes the proof. 
\end{proof}

The choice of representatives from $C(n,r)$ was arbitrary. We give an example below illustrating 
the isomorphisms for different choices of representatives.

\begin{example}
Consider $S=S_0(3, 5)$. Let $\lam=(2, 0, 3)$ and $\mu=(2, 3, 0)$. 
\begin{itemize}
\item[(1)] $Sk_\lam=So_\lam \oplus S(k_\lam -o_\lam)$ and $Sk_\mu=So_\mu \oplus S(k_\mu - o_\mu)$.
\item[(2)] $k_\lambda f_2^3e_2^3 =k_\lambda$ and $k_\mu e_2^3f_2^3 =k_\mu$. So the maps 
 $$\phi: Sk_\mu \rightarrow Sk_\lam, k_\mu \mapsto k_\mu e_2^3  \mbox{ and  } \psi: Sk_\lam \rightarrow Sk_\mu, k_\lam\mapsto k_\lam f_2^3$$
are isomorphisms. 

\item[(3)] $f_2^3o_\mu e_2^3=o_\lambda$ and $e_2^3o_\lambda f_2^3=o_\mu$. So
$$
\phi(S o_\mu)=S o_\lambda \mbox{ and } \psi(S o_\lambda)=S o_\mu.$$

\item[(4)] $f_2^3(k_\mu-o_\mu) e_2^3=k_\lambda- o_\lambda$ and $e_2^3(k_\lambda-o_\lambda )f_2^3=k_\mu-o_\mu$. So
$$\phi(S(k_\mu-o_\mu))=S(k_\lam-o_\lam) \mbox{ and } \psi(S(k_\lambda-o_{\lambda}))=S(k_{\mu}-o_{\mu}).$$
\end{itemize}
\end{example}

\subsection{Decomposition of $\zschur k_\lam$.}

\begin{theorem} \label{comparewithNorton}
Let $\lambda\in \Lambda(n,r)$. Then
$$\zschur k_\lam\cong \bigoplus_{\underline m} P_{c(\lambda,\underline m)},$$ where the sum is over all $\underline m$ maximal with respect to $\lambda$.
\end{theorem} 
\begin{proof}
There is an isomorphism from $S_0(n,r)o_{\lambda,\underline m}$ to $S_0(n,r)k_{c(\lambda,\underline m)}$ by Lemma \ref{isowithc}. Using Lemma
\ref{preserve}, we get an isomorphism $$\frac{S_0(n,r)o_{\lambda,\underline m}}{\sum_{o_{\lambda,\underline l}<o_{\lambda, \underline m}} S_0(n,r)o_{\lambda,\underline l}}\cong 
\frac{S_0(n,r)k_{c(\lambda,\underline m)}}{X_{c(\lambda,\underline m)}}=P_{c(\lambda,\underline m)}.$$
The theorem now follows from the filtration in Theorem \ref{filtsetup}.
\end{proof}

We remark that
there is a similar decomposition of $\zschur o_{\lambda,\underline l}$ for any $\underline l\in D(n)$.
This can be proved similarly, or using $\zschur o_{\lambda,\underline m}\cong \zschur k_{c(\lambda,\underline m)}$.

\begin{corollary} There is a decomposition of $S_0(n,r)$ into left modules
$$S_0(n,r)=\bigoplus_{(\lambda,\underline m)} P_{c(\lambda,\underline m)},$$ where the sum is over all pairs $(\lambda,\underline m)\in \Lambda(n,r)\times D(n)$ such that $\underline m$ is maximal with respect to $\lambda$.
\end{corollary}

Norton \cite{Norton} described indecomposable projective modules of the $0$-Hecke algebra $H_0(r)$, and 
showed that they are indexed by subsets of a set determined by $\lam \in \Lambda(n, r)$.
We remark that the decomposition in Theorem \ref{comparewithNorton} for $n=r$ and 
$\lambda=\underbrace{(1, \dots, 1)}_r$ coincides with  the description of Norton.

\begin{example} Let $S=S_0(4, 7)$,
$\lam=(2, 0, 3, 2)$  and $\mu=(2, 1,3, 1)$. We have the following Hasse-diagram of degenerations.

$$
\xymatrix @!=0.2cm @M=0pt
{
&&o_\lam\ar@{-} [dl]\ar@{-}[dr] &&&&&& o_\mu\ar@{-}[drr]\ar@{-}[dll]\ar@{-}[d]  \\
&o_{\lam, (3, 1)} \ar@{-}[dr] && o_{\lam, (1, 3)}\ar@{-}[dl]&&&  o_{\mu, (3, 1)} \ar@{-}[d]\ar@{-}[drr]&& o_{\mu, (2, 2)} \ar@{-}[drr]\ar@{-}[dll]
& &o_{\mu, (1, 3)}\ar@{-}[dll]\ar@{-}[d] \\
&&  k_\lam &&&& o_{\mu, (2, 1,1)} \ar@{-} [drr]&& o_{\mu, (1, 2, 1)}\ar@{-} [d] && o_{\mu, (1, 1, 2)}\ar@{-} [dll]\\
&&&&&&&& k_{\mu} &&&&&&
}
$$
So $$
\begin{array}{lll}
Sk_\lam&\cong& P_\lam \oplus \frac{So_{\lam, (3, 1)}}{So_\lam}\oplus \frac{So_{\lam, (1, 3)}}{So_\lam}\oplus So_\lam\\
&\cong & P_\lam\oplus P_{(5, 2,0,0)} \oplus P_{(2, 5,0,0)}\oplus P_{(7,0,0,0)}.
\end{array}
$$
and similarly
$$
Sk_\mu\cong P_\mu\oplus P_{(3, 3, 1,0)}\oplus P_{(2, 4, 1,0)}\oplus P_{(2, 1, 4,0)}\oplus P_{(6, 1,0,0)}\oplus P_{(3, 4,0,0)}\oplus P_{(2, 5,0,0)}\oplus P_{(7,0,0,0)}.
$$
We match the summands of $Sk_\mu$ to the description by Norton \cite[Corollary 4.14]{Norton}. 
The decomposition $\mu$ determines the set $J=\{ 2=\mu_1, 3=\mu_1+\mu_2, 6=\mu_1+\mu_2+\mu_3\}$.
The summands in the decomposition of $Sk_\mu$ above correspond to the indecomposable projective modules 
determined by the subsets $J, \{3, 6\}, \{2, 6\}, \{2, 3\}, \{6\}, \{ 3\}, \{2\}$  and $\emptyset$, respectively.
\end{example}

\end{document}